\numberwithin{equation}{section}
\theoremstyle{plain}
\newtheorem{lemma}{Lemma}[section]
\newtheorem{theorem}[lemma]{Theorem}
\newtheorem{proposition}[lemma]{Proposition}
\newtheorem{corollary}[lemma]{Corollary}
\theoremstyle{definition}
\newtheorem{definition}[lemma]{Definition}
\newtheorem{example}[lemma]{Example}
\theoremstyle{remark} 
\newtheorem{remark}[lemma]{Remark}
\newcommand{\Add}{\operatorname{Add}}
\newcommand{\chr}{\operatorname{\Delta}}
\newcommand{\cl}{\operatorname{\mathrm{cl}}}
\newcommand{\coind}{\operatorname{\mathrm{coind}}}
\newcommand{\Coloc}{\operatorname{\mathsf{Coloc}}}
\newcommand{\cosupp}{\operatorname{cosupp}}
\renewcommand{\dim}{\operatorname{dim}}
\newcommand{\End}{\operatorname{End}}
\newcommand{\Ext}{\operatorname{Ext}}
\newcommand{\Hom}{\operatorname{Hom}}
\newcommand{\ind}{\operatorname{\mathrm{ind}}}
\newcommand{\Ker}{\operatorname{Ker}}
\newcommand{\Loc}{\operatorname{\mathsf{Loc}}}
\renewcommand{\mod}{\operatorname{\mathsf{mod}}}
\newcommand{\Mod}{\operatorname{\mathsf{Mod}}}
\newcommand{\Prod}{\operatorname{Prod}}
\newcommand{\Proj}{\operatorname{Proj}}
\newcommand{\res}{\operatorname{res}}
\newcommand{\sHom}{\underline{\Hom}}
\newcommand{\Spec}{\operatorname{Spec}}
\newcommand{\StMod}{\operatorname{\mathsf{StMod}}}
\newcommand{\stmod}{\operatorname{\mathsf{stmod}}}
\newcommand{\supp}{\operatorname{supp}}
\newcommand{\pisupp}{\pi\text{-}\supp}
\newcommand{\picosupp}{\pi\text{-}\cosupp}
\newcommand{\Thick}{\operatorname{\mathsf{Thick}}}
\newcommand{\comp}{\mathop{\circ}}
\newcommand{\col}{\colon}
\newcommand{\ges}{{\scriptscriptstyle\geqslant}}
\newcommand{\kos}[2]{{#1}/\!\!/{#2}}
\newcommand{\op}{\mathrm{op}}
\newcommand{\da}{{\downarrow}}
\newcommand{\lra}{\longrightarrow}
\newcommand{\xra}{\xrightarrow}
\def\mcE{\mathcal{E}}
\def\mcU{\mathcal{U}} 
\def\mcV{\mathcal{V}}
\def\sfc{\mathsf c}
\def\sfA{\mathsf A} 
\def\sfC{\mathsf C}
\def\bbP{\mathbb P}
\def\bbZ{\mathbb Z}
\newcommand{\fa}{\mathfrak{a}} 
\newcommand{\fm}{\mathfrak{m}} 
\newcommand{\fp}{\mathfrak{p}}
\newcommand{\fq}{\mathfrak{q}}
\newcommand{\gam}{\varGamma} 
\newcommand{\lam}{\varLambda}
\def\Ga1{\operatorname{\mathbb G_{a(1)}}\nolimits}
\def\Tr{\operatorname{Tr}\nolimits}
\title[Colocalising subcategories]{Colocalising subcategories of modules over \\ finite group schemes}
\author[Benson, Iyengar, Krause, and Pevtsova]{Dave Benson, Srikanth
  B. Iyengar, Henning Krause \\ and Julia Pevtsova}
\address{Dave Benson \\ 
Institute of Mathematics\\ 
University of Aberdeen\\ 
King's College\\ 
Aberdeen AB24 3UE\\ 
Scotland U.K.}
\address{Srikanth B. Iyengar\\ 
Department of Mathematics\\
University of Utah\\ 
Salt Lake City, UT 84112\\ 
U.S.A.}
\address{Henning Krause\\ 
Fakult\"at f\"ur Mathematik\\ 
Universit\"at Bielefeld\\ 
33501 Bielefeld\\ 
Germany.}
\address{Julia Pevtsova\\ 
Department of Mathematics\\ 
University of Washington\\ 
Seattle, WA 98195\\ 
U.S.A.}
\begin{document}

\begin{abstract} 
  The Hom closed colocalising subcategories of the stable module
  category of a finite group scheme are classified. This complements
  the classification of the tensor closed localising subcategories in
  our previous work. Both classifications involve $\pi$-points in the
  sense of Friedlander and Pevtsova. We identify for each $\pi$-point
  an endofinite module which both generates the corresponding minimal
  localising subcategory and cogenerates the corresponding minimal
  colocalising subcategory.
\end{abstract}

\keywords{cosupport, stable module category, finite group scheme, colocalising subcategory}
\subjclass[2010]{16G10 (primary); 18E30, 20C20, 20G10 20J06 (secondary)}

\date{11th September 2016}

\thanks{SBI was partly supported by NSF grant DMS-1503044 and JP was partly supported by  NSF grants DMS-0953011 and DMS-1501146.}

\maketitle

\section{Introduction}

Let $G$ be a finite group scheme over a field $k$ of positive characteristic. There is a notion of $\pi$-cosupport~\cite{Benson/Iyengar/Krause/Pevtsova:2015a} for any $G$-module $M$, based on the notion of $\pi$-points of $G$ introduced by Friedlander and the fourth author~\cite{Friedlander/Pevtsova:2005a}. The $\pi$-cosupport of $M$, denoted $\picosupp_G(M)$, is a subset of $\Proj H^*(G,k)$.  The main result in this work is a classification of the colocalising subcategories of $\StMod G$, the stable module category of possibly infinite dimensional $G$-modules, in terms of $\pi$-cosupport.

\begin{theorem}
\label{th:costratification} 
Let $G$ be a finite group scheme over a field $k$. Then the assignment
\begin{equation*}
 \sfC\longmapsto \bigcup_{M\in\sfC}\picosupp_G(M)
\end{equation*} induces a bijection between the colocalising
subcategories of $\StMod G$  that are closed under tensor product
with simple $G$-modules
and the subsets of $\Proj H^*(G, k)$.
\end{theorem}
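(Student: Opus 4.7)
The approach is to fit Theorem~\ref{th:costratification} into the general framework developed by \bik\ for classifying localising and colocalising subcategories of a compactly generated triangulated category equipped with a central action of a graded Noetherian ring. Here $\StMod G$ is compactly generated and carries a canonical action of $R:=H^*(G,k)$, so the local cohomology functors $\gam_\fp$ and local homology functors $\lam^\fp$ are available for every $\fp\in\Proj R$. The companion classification for tensor closed localising subcategories via $\pisupp$ from the earlier work of the authors provides \emph{stratification}; the task is to upgrade this to \emph{costratification} via $\picosupp$.

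The argument splits naturally into three steps. First, I would establish the local--global principle for cosupport: any Hom closed subcategory $\sfC$ of $\StMod G$ closed under tensoring with simples is recovered from its intersections with the local pieces $\gam_\fp\StMod G$ as $\fp$ ranges over $\Proj R$. Granted stratification on the localising side and the comparison $\pisupp=\supp$, this is a formal consequence of the orthogonality of $\gam_\fp$ and $\lam^\fq$ for $\fp\neq\fq$. Second, the classification reduces to proving, for each $\fp\in\Proj R$, that $\gam_\fp\StMod G$ is minimal as a colocalising subcategory in the appropriate sense (closed under tensoring with simples). Third, this local minimality is what requires genuinely new input beyond what is already contained in stratification.

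The key ingredient for the third step is a theorem of Krause asserting that if an object $E$ in a compactly generated triangulated category is endofinite, then $\Loc(E)=\Coloc(E)$. Applied here, if I can exhibit, for each $\fp$, an endofinite $G$-module $E_\fp$ with $\pisupp_G(E_\fp)=\{\fp\}=\picosupp_G(E_\fp)$, then $\Coloc(E_\fp)$ coincides with the minimal localising subcategory at $\fp$, which by stratification is $\gam_\fp\StMod G$; from this, minimality of the local colocalising piece follows by a standard orthogonality argument. Constructing such $E_\fp$ for every $\pi$-point is thus the heart of the proof and is precisely the additional invariant promised in the abstract.

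To construct $E_\fp$ for a $\pi$-point $\alpha_K\col K[t]/(t^p)\to KG$ representing $\fp$, the natural strategy is to start with an explicit endofinite module over $K[t]/(t^p)$ supported at the unique nonzero prime (for instance, the appropriate shift of the trivial module, or a Carlson-type indecomposable), transport it to a $KG$-module via the right adjoint of $\alpha_K^{*}$, restrict back to $kG$ along $k\to K$, and pass to a suitable pure injective envelope. The main obstacle will be showing that this produces a module which is genuinely \emph{endofinite}, not merely pure injective, and whose $\pi$-support and $\pi$-cosupport are both equal to $\{\fp\}$ and independent of the choice of $\pi$-point representing $\fp$. Endofiniteness is fragile under adjoints and base change, so the control has to come from the rigid geometry of $\pi$-points together with the very restrictive module theory of $K[t]/(t^p)$; this is where the bulk of the technical work should be concentrated.
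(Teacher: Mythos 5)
Your high-level architecture matches the paper: reduce the classification, via the Benson/Iyengar/Krause local--global machinery, to the minimality of a single local colocalising piece at each $\fp$, and witness that minimality by a module $\chr_G(\alpha)=\res^K_k\alpha_!(K)$ built from a $\pi$-point. The paper does exactly this, and your observation that these modules are endofinite (with $K$ the trivial $K[t]/(t^p)$-module and no further envelope needed) is also correct and is Proposition~\ref{pr:dual}. The local--global step is likewise formal once comparison of $\pisupp/\picosupp$ with $\supp/\cosupp$ (Theorem~\ref{th:bik=fp}) is in hand.

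The genuine gap is in your Step~3, where you invoke ``a theorem of Krause asserting that if an object $E$ in a compactly generated triangulated category is endofinite then $\Loc(E)=\Coloc(E)$.'' No such theorem exists, and in the present situation it could not hold: Corollary~\ref{co:point-modules} gives $\Loc^{\otimes}(\chr_G(\alpha))=\gam_\fp\StMod G$ while $\Coloc^{\Hom}(\chr_G(\alpha))=\lam^\fp\StMod G$, and these are \emph{equivalent} via the adjunction $(\gam_\fp,\lam^\fp)$ but not equal as subcategories of $\StMod G$, since $\supp_G(M)\subseteq\{\fp\}$ and $\cosupp_G(M)\subseteq\{\fp\}$ are distinct conditions. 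What endofiniteness actually buys, via Crawley-Boevey, is $\Add(\chr_G(\alpha))=\Prod(\chr_G(\alpha))$ (Proposition~\ref{pr:dual}), a statement about additive closures that is much weaker than any identity between triangulated localising and colocalising hulls; and in fact this observation is not used anywhere in the paper's proof of Theorem~\ref{th:costratification}. The real work replacing your Step~3 is two-sided and substantially harder: Theorem~\ref{th:pg} shows $\Coloc(\chr_G(\alpha))=\lam^\fp\StMod G$ by running Brown representability (the objects $T_C(I)$), an extension of Tate duality \eqref{eq:tate} to finite group schemes so that perfect cogeneration can be checked, and the ``reduction to closed points'' Theorem~\ref{thm:reduction}; while Theorem~\ref{th:cg} shows $\chr_G(\alpha)\in\Coloc^{\Hom}(M)$ whenever $\fp\in\cosupp_G(M)$ by factoring the $\pi$-point through a quasi-elementary subgroup scheme and exploiting Lemma~\ref{le:qelem}. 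Only with \emph{both} inclusions does minimality of $\lam^\fp\StMod G$ follow (Corollary~\ref{co:costratified}), and then costratification is formal from \cite[Corollary~9.2]{Benson/Iyengar/Krause:2012b}. So your plan correctly identifies the object and the shape of the argument, but it is missing the actual mechanism for cogeneration and minimality --- endofiniteness alone does not deliver it.
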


The theorem is proved after Corollary~\ref{co:costratified}. Recall that a \emph{colocalising subcategory} $\sfC$ is a full triangulated subcategory that is closed under set-indexed products. Such a $\sfC$ is closed under tensor product with simple $G$-modules if and only if it is \emph{Hom closed}: If $M$ is in $\sfC$, so is $\Hom_{k}(L,M)$ for any $G$-module $L$.  Theorem~\ref{th:costratification} complements the classification of the tensor closed localising subcategories of $\StMod G$ from \cite{Benson/Iyengar/Krause/Pevtsova:2015b}. Combining them gives a remarkable bijection:

\begin{corollary}
\label{co:locandcoloc}
The map sending a localising subcategory $\sfC$ of $\StMod G$ to $\sfC^\perp$ induces a bijection
\[ 
\left\{
  \begin{gathered} \text{tensor closed localising}\\
\text{subcategories of $\StMod G$}
\end{gathered}\, \right\} \;\stackrel{\sim}\lra\; \left\{
\begin{gathered} \text{Hom closed colocalising}\\ \text{subcategories
of $\StMod G$}
\end{gathered}\,  \right \}\, .
\]
The inverse map sends a colocalising subcategory $\sfC$ to $^\perp\sfC$.\qed
\end{corollary}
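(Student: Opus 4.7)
Both sides of the claimed bijection are, by classification theorems, parameterised by the same set: Theorem~\ref{th:costratification} identifies Hom closed colocalising subcategories of $\StMod G$ with subsets of $\Proj H^*(G,k)$ via $\picosupp_G$, and the main theorem of \cite{Benson/Iyengar/Krause/Pevtsova:2015b} identifies the tensor closed localising subcategories with the same power set via $\pisupp_G$. The plan is therefore to show that under these identifications the map $\sfC \mapsto \sfC^\perp$ corresponds to taking complements in $\Proj H^*(G,k)$, which is manifestly a self-inverse bijection.

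The first step is purely formal. If $\sfC$ is tensor closed, then $\sfC^\perp$ is Hom closed, because $L \otimes_k -$ is left adjoint to $\Hom_k(L,-)$ and adjoints swap under taking right perpendiculars. Right perpendiculars are automatically closed under set-indexed products and triangles, so $\sfC^\perp$ is a colocalising subcategory; symmetrically ${}^\perp \sfD$ is a tensor closed localising subcategory whenever $\sfD$ is Hom closed colocalising.

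The crux is an orthogonality formula of the shape
\[
\Hom_{\StMod G}(M, N) = 0 \quad \Longleftrightarrow \quad \pisupp_G(M) \cap \picosupp_G(N) = \emptyset,
\]
at least when $M$ ranges over the endofinite modules promised in the abstract, each of which both generates the minimal localising subcategory and cogenerates the minimal colocalising subcategory at a $\pi$-point $\fp$. Granting this, if $\sfC$ is tensor closed localising with $\bigcup_{M \in \sfC} \pisupp_G(M) = \mcV$, then $\sfC$ is generated as a localising subcategory by the distinguished modules indexed by $\fp \in \mcV$, so
\[
\sfC^\perp = \{\, N \in \StMod G : \picosupp_G(N) \cap \mcV = \emptyset \,\}.
\]
By Theorem~\ref{th:costratification} this is precisely the Hom closed colocalising subcategory associated with $\Proj H^*(G,k) \setminus \mcV$. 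A symmetric argument, using the cogenerating side of the endofinite modules and the colocalising classification, shows that ${}^\perp \sfD$ is the tensor closed localising subcategory corresponding to the complement of $\bigcup_{M \in \sfD} \picosupp_G(M)$. Since complementation is an involution on the power set, the two assignments are mutually inverse bijections.

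The main obstacle is the orthogonality formula above: it is not formal but relies on the identification of the canonical endofinite (co)generators at each $\pi$-point, together with a local-global principle for $\Hom_{\StMod G}$ compatible with $\pi$-support and $\pi$-cosupport. Once that is in hand, the corollary reduces to bookkeeping with subsets of $\Proj H^*(G,k)$.
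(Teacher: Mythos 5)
Your argument is essentially the paper's, which attaches an immediate \verb|\qed| to this corollary: once the two classification theorems (costratification via Theorem~\ref{th:costratification} and stratification from \cite{Benson/Iyengar/Krause/Pevtsova:2015b}) are in place, the bijection via $\sfC\mapsto\sfC^\perp$ follows from the general framework of \cite[Theorem 9.7]{Benson/Iyengar/Krause:2012b}. The orthogonality you identify as the crux is exactly Propositions~\ref{pr:cosupp-point} and~\ref{pr:supp-point}, and the (co)generation by the endofinite point modules is Theorem~\ref{th:pg} and Corollary~\ref{co:point-modules}, so your sketch correctly fills in what the paper leaves implicit.
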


Predecessors of these results are the analogues for the 
derived category of a commutative noetherian ring by 
Neeman \cite{Neeman:2011a}, and the stable module 
category of a finite group~\cite{Benson/Iyengar/Krause:2012b}. 
Any finite group gives rise to a finite group scheme, and 
we obtain an entirely new proof in that case.

Products of modules tend to be more complicated than coproducts. This
is reflected by the fact that the classification of colocalising
subcategories formally implies the classification of localising
subcategories in terms of $\pi$-supports of $G$-modules; see
\cite[Theorem~9.7]{Benson/Iyengar/Krause:2012b}. So
Theorem~\ref{th:costratification} implies the classification result in
our work presented
in~\cite{Benson/Iyengar/Krause/Pevtsova:2015b}. However, the arguments
in the present work rely heavily on the tools developed
in~\cite{Benson/Iyengar/Krause/Pevtsova:2015b}, which, in turn, depend
on the fundamental results and geometric techniques for the
representation theory and cohomology of finite group schemes from
\cite{Suslin:2006a,Bendel/Friedlander/Suslin:1997b}.

An essential ingredient in the proof of
Theorem~\ref{th:costratification} is a family of $G$-modules, one
arising from each $\pi$-point of $G$.  We call them \emph{point
  modules} and write $\chr_G(\alpha)$, where
$\alpha\colon K[t]/(t^p)\to KG$ is the corresponding $\pi$-point.
They appear already in
\cite[Section~9]{Benson/Iyengar/Krause/Pevtsova:2015b} and play the
role of residue fields in commutative algebra. Indeed, while they are
not usually finite dimensional, they are always endofinite in the
sense of Crawley-Boevey~\cite{Crawley-Boevey:1991a}, as is proved in Proposition~\ref{pr:dual}.  It follows from
results in \cite{Benson/Iyengar/Krause/Pevtsova:2015b} that the
$\pi$-support of $\chr_{G}(\alpha)$ is equal to the prime ideal $\fp$
corresponding to $\alpha$, and that the localising subcategory
generated by $\chr_{G}(\alpha)$ is $\gam_{\fp}\StMod G$, the full
subcategory of $\fp$-local and $\fp$-torsion objects.

As Theorem~\ref{th:pg} we prove that $\chr_{G}(\alpha)$ also 
\emph{cogenerates} $\lam^\fp \StMod G$, the full subcategory 
of $\fp$-local and $\fp$-complete $G$-modules,  in the sense 
of \cite{Benson/Iyengar/Krause:2012b}. This result is an important 
step in the proof of Theorem~\ref{th:costratification}, because the 
subcategories $\lam^\fp \StMod G$, as $\fp$ varies over $\Proj H^*(G,k)$, 
cogenerate $\StMod G$. From this it follows that \emph{every} 
Hom closed colocalising subcategories of $\StMod G$ is cogenerated 
by point modules, which again highlights the special role played by them. 

There is a parallel between point modules and standard objects 
in highest weight categories studied by Cline, Parshall, and Scott~\cite{Cline/Parshall/Scott:1988a}. 
This is explained towards the end of this article. The notation 
$\chr_{G}(\alpha)$ reflects this connection.

\section{Recollections}
In this section we recall  basic notions and results on modules 
over finite group schemes required in this work. Our standard 
references are the books of Jantzen~\cite{Jantzen:2003a} and 
Waterhouse~\cite{Waterhouse:1979a}. For the later parts, and 
for the notation, we follow~\cite{Benson/Iyengar/Krause/Pevtsova:2015b}.

Let $G$ be a finite group scheme over a field $k$. Thus $G$ is an 
affine group scheme such that its coordinate algebra $k[G]$ is finite
dimensional as a $k$-vector space. The $k$-linear dual of $k[G]$ is 
a cocommutative Hopf algebra, called the group algebra of $G$, and
denoted $kG$. We identify $G$-modules with modules over the 
group algebra $kG$. The category of all (left) $G$-modules is denoted $\Mod G$.  

The stable module category $\StMod G$ is obtained from $\Mod G$ 
by identifying two morphisms between $G$-modules when they 
factor through a projective $G$-module. The tensor product of 
$G$-modules passes to $\StMod G$ and we obtain a compactly 
generated tensor triangulated category with suspension $\Omega^{-1}$,
the inverse of the syzygy functor. We use the notation $\sHom_G(M,N)$ 
for the $\Hom$-sets in $\StMod G$.  For details, readers might 
consult Carlson \cite[\S5]{Carlson:1996a} and Happel~\cite[Chapter 1]{Happel:1988a}.
 
In the context of finite groups there is a duality theorem due to
Tate~\cite[Chapter XII, Theorem 6.4]{Cartan/Eilenberg:1956a} that is
helpful in computing morphisms in the stable category. In the proof of
Lemma~\ref{le:pg} we need an extension of this to finite group
schemes which is recalled below.
 
\subsection*{Duality} 
Given a $k$-vector space $V$, we set $V^\vee := \Hom_k(V,k)$ to be the
dual vector space. If $V$ is a $G$-module, then $V^\vee$ can also be
endowed with a structure of a $G$-module using the Hopf algebra
structure of $kG$.

Let $G^\op$ denote the opposite group scheme that is given by the
group algebra $(kG)^\op$.  Given a $G^\op$-module $M$, we write
$DM:= \Hom_k(M,k)$ for the dual vector space considered as a
$G$-module. Let
\[
\tau = D \circ \Tr \colon \stmod G\xra{\ \sim\ } \stmod G\,
\] 
be the composition of the duality functor $D$ and the transpose
$\Tr\colon \stmod G \to \stmod G^\op$; see
\cite[III.4]{Skowronski/Yamagata:2011a} for the definition.  For any
$G$-module $M$ and finite dimensional $G$-module $N$, there is a
natural isomorphism of vector spaces
\begin{equation}
\label{eq:tate}
\sHom_G(N, M)^\vee \cong \sHom_G(M, \Omega^{-1} \tau N)\,.
\end{equation} 
This isomorphism can be deduced from a formula of Auslander and 
Reiten~\cite[Proposition~I.3.4]{Auslander:1978a}---see 
also \cite[Corollary p.~269]{Krause:2003a}--- which yields the first isomorphism below 
\[
 \sHom_G(N, M)^\vee \cong \Ext^{1}_{G}(M,\tau N)\cong \sHom_{G}(M,\Omega^{-1} \tau N)\,.
\]
When $kG$ is symmetric (in particular, whenever $G$ is a finite
group), we have $\tau N \cong \Omega^2 N $. This follows from
\cite[IV.8]{Skowronski/Yamagata:2011a} and reduces
\eqref{eq:tate} to  Tate duality. 

\subsection*{Extending the base field} 
Let $G$ be a finite group scheme over a field $k$. If $K$ is 
a field extension of $k$, we write $K[G]$ for $K \otimes_k k[G]$, 
which is a commutative Hopf algebra over $K$. This defines a 
finite group scheme over $K$ denoted $G_K$.  We have a natural 
isomorphism $KG_K\cong K\otimes_k kG$ and we simplify 
notation by writing $KG$. The restriction functor
\[
\res^K_k\colon \Mod G_K \lra \Mod G
\]
admits a left adjoint that sends a $G$-module $M$ to
\[ 
M_K:=K \otimes_k M,
\]
and a right adjoint sending $M$ to
\[ 
M^K := \Hom_k(K,M).
\] 
The next result tracks how these functors interact with taking 
tensors and modules of homomorphisms. We give proofs for 
lack of an adequate reference.

\begin{lemma}
\label{le:restriction}
  Let $K$ be a field extension of $k$.  For a $G_K$-module $M$ and a
  $G$-module $N$, there are natural isomorphisms of $G$-modules:
\begin{align*}
\res^K_k(M\otimes_K N_K)&\cong (\res^K_k M)\otimes_k N,\\
\res^K_k\Hom_K(M,N^K)&\cong\Hom_k(\res^K_k M,N).
\end{align*}
\end{lemma}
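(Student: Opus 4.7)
The plan is to treat both isomorphisms as instances of standard adjunctions at the level of $k$-vector spaces, the content of the lemma being the verification of $kG$-equivariance through the coproduct $\Delta(g)=\sum g_{(1)}\otimes g_{(2)}$ on $kG$. Naturality in $M$ and $N$ is then immediate from the explicit formulas.

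For the first isomorphism, I would define
\[
\phi\colon M\otimes_K(K\otimes_k N)\lra M\otimes_k N,\qquad m\otimes_K(\lambda\otimes n)\longmapsto (\lambda m)\otimes n,
\]
whose inverse sends $m\otimes n$ to $m\otimes_K(1\otimes n)$. This is the standard base change isomorphism of $k$-vector spaces. The $G_K$-action on $M\otimes_K N_K$ is diagonal through $\Delta$ and each $g\in kG\subseteq KG$ fixes the scalar $\lambda\in K$, so
\[
g\cdot\bigl(m\otimes_K(\lambda\otimes n)\bigr)=\sum g_{(1)}m\otimes_K\bigl(\lambda\otimes g_{(2)}n\bigr).
\]
Applying $\phi$ gives $\sum\lambda(g_{(1)}m)\otimes g_{(2)}n=\sum g_{(1)}(\lambda m)\otimes g_{(2)}n$, which is precisely the diagonal $G$-action on $(\res^K_k M)\otimes_k N$.

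For the second isomorphism, the Hom-Tensor adjunction combined with $M\otimes_K K\cong M$ yields the $k$-linear bijection
\[
\psi\colon \Hom_K\bigl(M,\Hom_k(K,N)\bigr)\xra{\ \sim\ }\Hom_k(\res^K_k M,N),\qquad f\longmapsto\bigl(m\mapsto f(m)(1)\bigr),
\]
whose inverse sends $g$ to $m\mapsto(\lambda\mapsto g(\lambda m))$. To check $kG$-equivariance I would first make explicit the $G_K$-module structure on $N^K=\Hom_k(K,N)$ coming from its description as the right adjoint of $\res^K_k$; identifying $N^K$ with $\Hom_{kG}(KG,N)$, one finds $((\mu\otimes h)\cdot\beta)(\lambda)=h\beta(\lambda\mu)$, so elements of $kG\subseteq KG$ act on $N^K$ by postcomposition. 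Using the standard action on internal Homs, $(g\cdot f)(m)=\sum g_{(1)}f(S(g_{(2)})m)$, evaluation at $1\in K$ then intertwines this with the standard $G$-action on $\Hom_k(\res^K_k M,N)$.

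The main obstacle is nothing conceptually hard---it is a matter of carefully tracking the coproduct and antipode of $kG$ through the restriction-coinduction adjunction, using that $kG$ fixes the scalar factor $K\subseteq KG$ to separate the scalar and group parts in both identifications.
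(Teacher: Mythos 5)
Your argument for the first isomorphism is essentially the paper's: both exhibit the standard base--change bijection on underlying vector spaces and observe that it is compatible with the diagonal $G$-actions because $kG \subseteq KG$ acts $K$-linearly on $M$.

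Where you diverge is the second isomorphism. The paper does not compute anything there: it observes that $\res^K_k\Hom_K(M,(-)^K)$ is the right adjoint of $\res^K_k(M\otimes_K(-)_K)$, that $\Hom_k(\res^K_kM,-)$ is the right adjoint of $(\res^K_kM)\otimes_k-$, and that the first isomorphism identifies these two left adjoints; uniqueness of adjoints then gives the second isomorphism for free, with no mention of the coproduct or antipode. You instead verify the second isomorphism directly: you write the evaluation map $\psi\colon f\mapsto(m\mapsto f(m)(1))$, work out the $KG$-action on $N^K\cong\Hom_{kG}(KG,N)$ as $((\mu\otimes h)\cdot\beta)(\lambda)=h\beta(\lambda\mu)$, and check that for $g\in kG$ (so $\mu=1$) evaluation at $1$ intertwines the two internal Hom actions $(g\cdot f)(m)=\sum g_{(1)}f(S(g_{(2)})m)$. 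Your formulas and the resulting check are correct, but they require explicitly tracking the coproduct and antipode conventions, whereas the paper's adjunction argument sidesteps all of that bookkeeping once the first isomorphism is in hand. Both routes are valid; yours is more concrete and self-contained, the paper's is shorter and less fragile with respect to Hopf-algebra sign and convention choices.
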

\begin{proof}
The first isomorphism is clear since the $k$-linear isomorphism
\[M\otimes_K (K\otimes_k N)\cong (M\otimes_K K)\otimes_k N\cong
M\otimes_k N\]
is compatible with the diagonal $G$-actions. 

The second isomorphism follows from the first one, because the functor
\begin{align*}
\res^K_k\Hom_K(M,(-)^{K}) &\quad\text{is right adjoint to}\quad
\res^K_k(M\otimes_K (-)_{K}),
\intertext{while the functor} 
\Hom_k(\res^K_kM,-) &\quad\text{is right
adjoint to}\quad (\res^K_kM)\otimes_k-. \qedhere
\end{align*} 
\end{proof}

\subsection*{Subgroup schemes}

For each subgroup scheme $H$ of $G$ restriction is a functor
\[ 
\res^G_H\col \Mod G \lra \Mod H.
\] 
This has a right adjoint called \emph{induction}
\[ 
\ind^G_H\col \Mod H \lra \Mod G
\] 
as described in \cite[I.3.3]{Jantzen:2003a}, and a left adjoint called \emph{coinduction}
\[ 
\coind^G_H\col \Mod H \lra \Mod G
\] 
as described in \cite[I.8.14]{Jantzen:2003a}.

\begin{lemma}
\label{le:subgroup}
Let $H$ be a subgroup scheme of $G$. For any $H$-module $M$ and $G$-module $N$ there are natural isomorphisms:
\begin{align*}
\coind^G_H(M\otimes_k \res^G_HN)&\cong (\coind^G_H M)\otimes_k N\\
\ind^G_H\Hom_k(M,\res^G_H N) &\cong\Hom_k(\coind^G_H M,N). 
\intertext{In particular, for $M=k$ these give isomorphisms:}
\coind^G_H\res^G_HN&\cong (\coind^G_H k)\otimes_k N\\
\ind^G_H\res^G_H N&\cong\Hom_k(\coind^G_H k,N).
\end{align*}
\end{lemma}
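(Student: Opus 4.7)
The plan is to deduce both isomorphisms by Yoneda, using that $\coind^G_H$ is left adjoint to $\res^G_H$ while $\ind^G_H$ is right adjoint to $\res^G_H$, together with the standard tensor-hom adjunction and the fact that restriction commutes with tensor products and with internal Hom over $k$ (which is immediate from the diagonal action).

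For the first isomorphism, I would fix a $G$-module $L$ and compute
\begin{align*}
\Hom_G\bigl(\coind^G_H(M\otimes_k \res^G_H N),\,L\bigr)
&\cong \Hom_H\bigl(M\otimes_k \res^G_H N,\,\res^G_H L\bigr)\\
&\cong \Hom_H\bigl(M,\,\Hom_k(\res^G_H N,\res^G_H L)\bigr)\\
&\cong \Hom_H\bigl(M,\,\res^G_H\Hom_k(N,L)\bigr)\\
&\cong \Hom_G\bigl(\coind^G_H M,\,\Hom_k(N,L)\bigr)\\
&\cong \Hom_G\bigl((\coind^G_H M)\otimes_k N,\,L\bigr),
\end{align*}
using in order $(\coind^G_H\dashv\res^G_H)$, tensor-hom over $k$, compatibility of $\res^G_H$ with $\Hom_k$, $(\coind^G_H\dashv\res^G_H)$ again, and tensor-hom over $k$. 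All isomorphisms are natural in $L$, so Yoneda yields the required $G$-equivariant isomorphism.

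For the second isomorphism I would run a parallel computation, this time probing with $\Hom_G(L,-)$:
\begin{align*}
\Hom_G\bigl(L,\,\ind^G_H\Hom_k(M,\res^G_H N)\bigr)
&\cong \Hom_H\bigl(\res^G_H L,\,\Hom_k(M,\res^G_H N)\bigr)\\
&\cong \Hom_H\bigl(M\otimes_k \res^G_H L,\,\res^G_H N\bigr)\\
&\cong \Hom_G\bigl(\coind^G_H(M\otimes_k \res^G_H L),\,N\bigr)\\
&\cong \Hom_G\bigl((\coind^G_H M)\otimes_k L,\,N\bigr)\\
&\cong \Hom_G\bigl(L,\,\Hom_k(\coind^G_H M,N)\bigr),
\end{align*}
where the third line uses $(\coind^G_H\dashv\res^G_H)$ and the fourth line applies the first isomorphism of the lemma with $L$ in place of $N$. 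Naturality in $L$ and Yoneda close the argument. The two special cases at $M=k$ then fall out from $k\otimes_k(-)\cong(-)$ and $\Hom_k(k,-)\cong(-)$.

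The only real subtlety is keeping track of the $G$-actions in each intermediate step: the $H$-action on $M\otimes_k\res^G_H N$ and on $\Hom_k(M,\res^G_H N)$ must be the diagonal one, and one should verify that the comparison map $\res^G_H\Hom_k(N,L)\to\Hom_k(\res^G_H N,\res^G_H L)$ is indeed an $H$-equivariant isomorphism. This is the analogue of Lemma \ref{le:restriction} for subgroup schemes, and follows from the Hopf algebra structure together with the definition of the diagonal action, so it should not present any real obstacle. The essential ingredients are therefore all formal, and the proof is purely a matter of chaining together adjunctions.
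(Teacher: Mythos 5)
Your proof is correct, and for the second isomorphism your Yoneda chain is just an unfolding of the same argument the paper uses (uniqueness of right adjoints: both $\ind^G_H\Hom_k(M,-)\res^G_H$ and $\Hom_k(\coind^G_HM,-)$ are right adjoint to $(\coind^G_HM)\otimes_k-$ once the first isomorphism is in hand). For the first isomorphism, however, your route is genuinely different from the paper's. The paper writes $\coind^G_H = kG\otimes_{kH}-$ and appeals directly to the ``tensor identity'' (projection formula) $kG\otimes_{kH}(M\otimes_k\res^G_H N)\cong(kG\otimes_{kH}M)\otimes_k N$ -- an explicit isomorphism built from the comultiplication on $kG$, which the paper labels (somewhat tersely) ``associativity of tensor products.'' You instead deduce it formally by probing with $\Hom_G(-,L)$, chaining $\coind^G_H\dashv\res^G_H$, the internal tensor--hom adjunction in $\Mod H$ and $\Mod G$, and the trivial compatibility of restriction with $\Hom_k$. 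The trade-off is that your argument never exhibits the actual isomorphism but instead leans on the closed symmetric monoidal structure of $\Mod H$ and $\Mod G$ (which requires the Hopf algebra structure with antipode, bijective here since $kG$ is cocommutative); the paper's argument is more hands-on but packs the Hopf-theoretic content into the middle step. Both are standard and correct, and you rightly flag the only genuine checkpoint -- that $\res^G_H$ commutes with $\Hom_k$ and that the $H$-actions throughout are the diagonal ones -- which is immediate.
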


\begin{proof}  
Recalling that $\coind^{G}_{H}= kG\otimes_{kH} - $,  the first isomorphism follows from associativity of tensor products:
\begin{align*}
\coind^G_H(M\otimes_k \res^G_H N)&\cong  kG\otimes_{kH} (M\otimes_k \res^G_H N)\\
&\cong( kG\otimes_{kH} M)\otimes_k N\\
&\cong (\coind^G_H M)\otimes_k N.
\end{align*}

The second isomorphism follows from the first one, because the functor
\begin{align*}
\ind^G_H\Hom_k(M,-)\res^G_H&\quad\text{is right adjoint to}\quad
\coind^G_H(M\otimes_k-)\res^G_H,
\intertext{while the functor} 
\Hom_k(\coind^G_HM,-) &\quad\text{is right
adjoint to}\quad (\coind^G_HM)\otimes_k-.\qedhere
\end{align*} 
\end{proof}

\subsection*{Cohomology and $\pi$-points}
Let $k$ be a field of positive characteristic $p$ and $G$ a finite group scheme over $k$. We write $H^*(G,k)$ for the cohomology algebra of $G$ and $\Proj H^*(G,k)$ for the set of its homogeneous prime ideals not containing $H^{\ges 1}(G,k)$, the elements of positive degree.

A \emph{$\pi$-point} of $G$, defined over a field extension $K$ of $k$, is a morphism of $K$-algebras
\[ 
\alpha\colon K[t]/(t^p) \lra KG
\] 
that factors through the group algebra of a unipotent abelian subgroup scheme of $G_{K}$, and such that $KG$ is flat when viewed as
a left (equivalently, as a right) module over $K[t]/(t^{p})$ via $\alpha$. Given such an $\alpha$, restriction yields a functor
\[ 
\alpha^{*}\colon \Mod KG \lra \Mod K[t]/(t^{p})\,.
\]
We write $H^{*}(\alpha)$ for the composition of homomorphisms of $k$-algebras
\[ 
H^{*}(G,k) = \Ext^{*}_{G}(k,k) \xra{\ K\otimes_{k}-\ } \Ext^{*}_{G_K}(K,K) \xra{\ \alpha^*\ } \Ext^{*}_{K[t]/(t^{p})}(K,K).
\] 
The radical of the ideal $\Ker H^{*}(\alpha)$ is a prime ideal in $H^*(G,k)$, and the assignment $\alpha\mapsto \sqrt{\Ker H^{*}(\alpha)}$ yields a bijection between the equivalence classes of $\pi$-points and $\Proj H^*(G,k)$; see \cite[Theorem~3.6]{Friedlander/Pevtsova:2007a}.  Recall that $\pi$-points $\alpha\colon K[t]/(t^{p})\to KG$ and $\beta\colon L[t]/(t^{p})\to LG$ are \emph{equivalent} if for every
$G$-module $M$ the module $\alpha^*(M_K)$ is projective if and only if $\beta^*(M_L)$ is projective.  In the sequel, we identify a prime in
$\Proj H^{*}(G,k)$ and the corresponding equivalence class of $\pi$-points.

Given a point in $\Proj H^{*}(G,k)$, there is some flexibility in choosing a $\pi$-point representing it. This will be important in the sequel.

\begin{remark}
\label{re:choosing-pi}
We call a group scheme $\mcE$ quasi-elementary if there is an isomorphism $\mcE \cong \mathbb G_{a(r)} \times E$ where $\mathbb G_{a(r)}$ is the $r^{\rm th}$ Frobenius kernel of the additive group $\mathbb G_a$ and $E$ is an elementary abelian $p$-group. 

By \cite[Proposition~4.2]{Friedlander/Pevtsova:2005a}, given a $\pi$-point $\alpha\colon K[t]/(t^{p})\to KG$, there exists an equivalent  $\pi$-point $\beta\colon K[t]/(t^{p})\to KG$ that factors through a quasi-elementary subgroup scheme of $G_{K}$. 

A point $\fp$ in $\Proj H^*(G,k)$ is \emph{closed} if there is no point in $\Proj H^*(G,k)$ properly containing it as a prime ideal. Then there exists a $\pi$-point $\alpha\colon K[t]/(t^{p})\to KG$ such that $K$ is finite dimensional over $k$; see \cite[Theorem~4.2]{Friedlander/Pevtsova:2007a}. In view of the preceding paragraph, one may choose an $\alpha$ that factors through a quasi-elementary subgroup scheme of $G_{K}$.
\end{remark}

\subsection*{Local cohomology and completions}
We recall from \cite{Benson/Iyengar/Krause:2008a, Benson/Iyengar/Krause:2012b} the definition of local cohomology and completion for
$G$-modules.

The algebra $H^*(G,k)$ acts on $\StMod G$. This means that for $G$-modules $M,N$ there is a natural action of $H^*(G,k)$ on
\[
\sHom^*_G(M,N)=\bigoplus_{i\in\mathbb Z}\sHom_G(\Omega^i M,N)
\]
via the homomorphism of $k$-algebras
\[
-\otimes_k M\colon H^*(G,k)=\Ext_G^*(k,k)\lra\sHom^*_G(M,M).
\]

Fix $\fp\in\Proj H^*(G,k)$. There is a localisation functor
$\StMod G\to \StMod G$ sending $M$ to $M_\fp$ such that the natural
morphism $M\to M_\fp$ induces an isomorphism
\[
\sHom^*_G(-,M)_\fp\xra{\ \sim\ }\sHom^*_G(-,M_\fp)
\]
when restricted to finite dimensional $G$-modules. A $G$-module $M$ is
called \emph{$\fp$-local} if $M\xra{\sim} M_\fp$ and we write
$(\StMod G)_\fp$ for the full subcategory of $\fp$-local $G$-modules.
The module $M$ is \emph{$\fp$-torsion} if $M_\fq=0$ for all
$\fq\in\Spec H^*(G,k)$ that do not contain $\fp$. There is a
colocalisation functor $\gam_{\mcV(\fp)}\colon \StMod G\to \StMod G$
such that the natural morphism $\gam_{\mcV(\fp)}(M)\to M$ is an
isomorphism if and only $M$ is $\fp$-torsion. The functor
$\gam_{\mcV(\fp)}$ admits a right adjoint, denoted $\lam^{\mcV(\fp)}$
and called \emph{$\fp$-completion}. We say that $M$ is
\emph{$\fp$-complete} if the natural map $M\to \lam^{\mcV(\fp)}M$ is
an isomorphism.

The functor $\gam_\fp\colon \StMod G\to \StMod G$ sending 
$M$ to $\gam_{\mcV(\fp)} (M_\fp)$ gives \emph{local cohomology at $\fp$}. 
It has a right adjoint $\lam^\fp\colon\StMod G\to\StMod G$ 
that plays the role of \emph{completion at $\fp$}, for modules 
over commutative rings.

\subsection*{Koszul objects and reduction to closed points} 
\label{sec:koszul}
For a cohomology class $\zeta$ in $H^*(G,k)$, let $\kos k{\zeta}$ be  a mapping cone of the morphism $k \to \Omega^{-d}k$ 
in $\StMod G$ defined by $\zeta$.  Note that $\kos k{\zeta} \cong \Omega^{-d-1}L_\zeta$  where $L_\zeta$ is the Carlson module~\cite{Carlson:1983} defined by $\zeta$. For a homogeneous ideal $\fa$ in $H^*(G,k)$, we pick a system of homogeneous generators $\zeta_1, \ldots, \zeta_n$, and define a Koszul object  $\kos k{\fa}$ to be 
\[
\kos k{\fa} := \kos k{\zeta_1} \otimes_k \ldots \otimes_k \kos k{\zeta_n}.
\]
Observe that the map $k\to \Omega^{-d}k$ defined by $\zeta$ becomes an isomorphism when localised at any prime ideal $\fp$ of $H^{*}(G,k)$ not containing $\zeta$. Given this the next result is \cite[Theorem~8.8]{Benson/Iyengar/Krause/Pevtsova:2015b}.

\begin{theorem}
\label{thm:reduction}
Let $\fp$ be a point in $\Proj H^*(G,k)$. There exists a field extension $L/k$ and an ideal $\fq$ of $H^*(G_L,L)$ with radical $\sqrt \fq$ a closed point in  $\Proj H^*(G_L,L)$ lying over $\fp$ such that there is an isomorphism   
\[ 
\res^L_k (\kos L{\fq}) \cong (\kos k{\fp})_\fp.
\]
\end{theorem}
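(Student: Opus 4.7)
The strategy is to enlarge the base field so that the prime $\fp$ lifts to a closed point, and then to use compatibility of the Koszul construction with base change and localization. First I would choose the field extension $L/k$. Using the parametrization of $\Proj H^*(G,k)$ by equivalence classes of $\pi$-points \cite[Theorem~3.6]{Friedlander/Pevtsova:2007a} and the flexibility in choosing a representative noted in Remark~\ref{re:choosing-pi}, I pick a $\pi$-point representing $\fp$ defined over a field $L$ sufficiently large that the associated prime $\bar\fp \in \Proj H^*(G_L,L)$ is closed and pulls back to $\fp$ along the base-change homomorphism $H^*(G,k) \to H^*(G_L,L)$.

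Next I would specify $\fq$. Letting $\zeta_1,\ldots,\zeta_n$ be homogeneous generators of $\fp$, I choose additional homogeneous elements $\xi_1,\ldots,\xi_m \in H^*(G_L,L)$ so that $\fq := (\zeta_1 \otimes 1,\ldots,\zeta_n \otimes 1, \xi_1,\ldots,\xi_m)$ satisfies $\sqrt{\fq} = \bar\fp$. The multiplicativity of the Koszul construction in its generators together with its compatibility under base change then gives
\[
\kos{L}{\fq} \cong (L \otimes_k \kos{k}{\fp}) \otimes_L \kos{L}{\xi_1} \otimes_L \cdots \otimes_L \kos{L}{\xi_m}.
\]

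Finally, I would verify that applying $\res^L_k$ to this tensor product yields $(\kos{k}{\fp})_\fp$. The key observation is that $\kos{L}{\fq}$ is supported exactly at $\bar\fp$ in $\Proj H^*(G_L,L)$, so its restriction to $k$ is $\fp$-local and $\fp$-torsion in $\StMod G$. There is a natural map $\kos{k}{\fp} \to \res^L_k(\kos{L}{\fq})$ arising from the unit of the base-change adjunction combined with the Koszul structure maps $L \to \kos{L}{\xi_j}$; the main obstacle is to show that this map becomes an isomorphism after applying $(-)_\fp$ and thus exhibits the target as the $\fp$-localization of $\kos{k}{\fp}$. This reduces to checking that each class $\eta \in H^*(G,k) \setminus \fp$ acts invertibly on $\res^L_k(\kos{L}{\fq})$, which in turn follows from the fact that the image of $\eta$ in $H^*(G_L,L)$ lies outside $\bar\fp$ and hence acts invertibly on every $\bar\fp$-local $L$-object. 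Carefully organising this verification, by tracking the action of $H^*(G,k)$ on restricted $L$-modules through the base-change homomorphism and invoking the universal property of $\fp$-localization, is the technical heart of the proof.
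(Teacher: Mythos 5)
The paper does not actually prove Theorem~\ref{thm:reduction}; it imports it verbatim from \cite[Theorem~8.8]{Benson/Iyengar/Krause/Pevtsova:2015b}, preceded only by the hint that $\kos k\zeta$ localises to zero at primes not containing $\zeta$, and followed by the warning that the statement ``effectively states that there exist an ideal $\fq$ \emph{and} a choice of generators'' with the required property. So there is no in-paper proof to compare against, and the theorem is genuinely nontrivial: the choices of $L$, $\fq$, and the generators must be made carefully.

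Your sketch starts on the right track (pass to a closed point $\bar\fp$ over a larger field $L$, generate $\fq$ by the images of generators of $\fp$ together with extra classes $\xi_j$, and use multiplicativity of Koszul objects plus Lemma~\ref{le:restriction} to write $\res^L_k(\kos L\fq)\cong N\otimes_k\kos k\fp$ with $N=\res^L_k(\kos L{\xi_1}\otimes_L\cdots\otimes_L\kos L{\xi_m})$). However, the central step is wrong as stated. You claim that proving the natural map $\kos k\fp\to\res^L_k(\kos L\fq)$ becomes an isomorphism after applying $(-)_\fp$ ``reduces to checking that each class $\eta\in H^*(G,k)\setminus\fp$ acts invertibly on $\res^L_k(\kos L\fq)$.'' That verification only establishes that the \emph{target} is $\fp$-local, i.e.\ that $\res^L_k(\kos L\fq)_\fp\cong\res^L_k(\kos L\fq)$; it is a statement about an object, not about a map, and says nothing about whether the induced morphism $(\kos k\fp)_\fp\to\res^L_k(\kos L\fq)$ is an isomorphism. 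One would still have to show that the cone of $\kos k\fp\to N\otimes_k\kos k\fp$ vanishes after localising at every prime contained in $\fp$, or equivalently identify its cohomology --- and that is exactly where the careful choice of $L$ and of the $\xi_j$ becomes essential. ``Sufficiently large $L$'' is not a construction; for an arbitrary such $L$ and arbitrary lifts $\xi_j$, the object $N\otimes_k\kos k\fp$ has no reason to be $(\kos k\fp)_\fp$. So the proposal identifies the correct scaffolding (base change, Koszul multiplicativity, Lemma~\ref{le:restriction}) but the step you call ``the technical heart'' is not merely unverified --- the reduction announced there is logically incorrect, and the genuine content of the theorem lies precisely in the part you have elided.
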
 

The construction of $\kos L{\fq}$ involves a choice of  generators for $\fq$, so the theorem effectively states that there exist 
an ideal $\fq$ \textit{and} a choice of generators that produces the Koszul object with required properties. For details, see \cite[Section 8]{Benson/Iyengar/Krause/Pevtsova:2015b}.

\subsection*{Brown representability}  
\label{sec:brown}
Let $C$ be a finite dimensional $G$-module and $I$ an injective $H^*(G,k)$-module.  Recall that $H^*(G,k)$ acts on $\sHom^*_G(C,M)$ for any $M \in \StMod G$ and consider the contravariant functor
\[
\Hom_{H^*(G,k)}(\sHom^*_G(C,-),I)\colon \StMod G \lra \mathsf{Ab} 
\] 
This functor takes triangles to exact sequences and coproducts to products. Hence, 
by the contravariant version of Brown Representability (see \cite{Brown:1965} or \cite{Neeman:1996}), 
there exists a $G$-module $T_{C}(I)$ such that 
 \begin{equation}
 \label{eq:brown}
 \Hom_{H^*(G,k)}(\sHom^*_G(C,-),I) \cong \sHom_G(-,T_{C}(I)).
\end{equation}
We refer to \cite{Benson/Iyengar/Krause:2012b, Benson/Krause:2002a}
for details about these modules.

\subsection*{Support and cosupport}
The following definitions of $\pi$-support and $\pi$-cosupport of a $G$-module $M$ are from \cite{Friedlander/Pevtsova:2007a} and
\cite{Benson/Iyengar/Krause/Pevtsova:2015a} respectively. We set
\begin{align*}
\pisupp_{G}(M) &:= \{\fp\in\Proj H^*(G,k) \mid \text{$\alpha_\fp^*(M_K)$ is not projective}\}, \\
\picosupp_{G}(M) &:= \{\fp\in\Proj H^*(G,k) \mid \text{$\alpha_\fp^*(M^K)$ is not projective}\}.
\end{align*}
Here $\alpha_\fp\colon K[t]/(t^p)\to KG$ denotes a $\pi$-point corresponding to $\fp$. Both $\pisupp$ and $\picosupp$ are well
defined on the equivalence classes of $\pi$-points \cite[Theorem~2.1]{Benson/Iyengar/Krause/Pevtsova:2015a}.

The local cohomology functors $\gam_\fp$ and their right adjoints $\lam^\fp$ yield alternative notions of support and cosupport for a
$G$-module $M$; see \cite{Benson/Iyengar/Krause:2008a,  Benson/Iyengar/Krause:2012b}. We set
\begin{align*}
\supp_{G}(M) &:=\{\fp \in \Proj H^{*}(G,k)\mid \gam_{\fp}M \neq 0\},\\
\cosupp_{G}(M)& := \{\fp\in\Proj H^*(G,k) \mid \lam^\fp M\neq 0\}.
\end{align*}
It is an important fact that these notions agree with the ones defined via $\pi$-points. This has been proved in \cite{Benson/Iyengar/Krause/Pevtsova:2015b} and will be used freely throughout this work.

\begin{theorem}
\label{th:bik=fp}
For every $G$-module $M$ there are equalities
\[
\pisupp_{G}(M)=\supp_{G}(M) \quad\text{and}\quad \picosupp_{G}(M)=\cosupp_{G}(M).
\]
\end{theorem}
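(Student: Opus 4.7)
The plan is to reduce both equalities to a computation over quasi-elementary subgroup schemes via a two-step procedure: first pass from an arbitrary point $\fp \in \Proj H^*(G,k)$ to a closed point over an extension field, and then pass from $G$ to a quasi-elementary subgroup scheme through which a convenient representative $\pi$-point factors.

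For the support equality $\pisupp_G(M) = \supp_G(M)$, the idea is to detect support via Koszul objects. Theorem~\ref{thm:reduction} produces an extension $L/k$ and an ideal $\fq$ with $\sqrt{\fq}$ closed such that $\res^L_k(\kos L \fq) \cong (\kos k \fp)_\fp$. Combined with standard compatibilities of $\gam_\fp$ and $\pisupp$ with base change and restriction, this reduces the question to the closed-point case. For a closed point, Remark~\ref{re:choosing-pi} lets us choose an equivalent $\pi$-point $\alpha\col K[t]/(t^p) \to KG$ with $K/k$ finite, factoring through a quasi-elementary subgroup scheme $\mcE \cong \mathbb G_{a(r)} \times E$. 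For such $\mcE$ the cohomology ring is polynomial and the $\pi$-points are concretely described in terms of shifted subgroups, so one can verify the equality directly at this level; the restriction/coinduction adjunction from Lemma~\ref{le:subgroup} then transfers the result back to $G$.

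For the cosupport equality $\picosupp_G(M) = \cosupp_G(M)$, the same structural reduction is in principle available, using the dualised base-change compatibility $\res^K_k \Hom_K(-, N^K) \cong \Hom_k(\res^K_k -, N)$ from Lemma~\ref{le:restriction} together with the corresponding $\coind$-formula from Lemma~\ref{le:subgroup}. A complementary route is to dualise the support result: for a finite-dimensional test module $C$, one combines the Tate-style isomorphism \eqref{eq:tate} with the Brown representability objects $T_C(I)$ of \eqref{eq:brown} to translate $\lam^\fp M = 0$ into vanishing statements about $\sHom^*_G(C,M)$ after localisation at $\fp$, and compares this with a parallel cohomological characterisation of $\alpha^*_\fp(M^K)$ being projective.

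The main obstacle is the cosupport half. The functor $(-)^K = \Hom_k(K,-)$ and the completion $\lam^\fp$ both involve products and dualisation, so they interact with base change, subgroup restriction, and Koszul constructions much less cleanly than tensor products and $\gam_\fp$ do. Consequently, while the support direction reduces essentially mechanically to the quasi-elementary case, the cosupport direction will require either a markedly more intricate direct reduction at the quasi-elementary level, or a carefully engineered duality that converts cosupport statements about $M$ into support statements about a well-chosen family of test modules such as the $T_C(I)$.
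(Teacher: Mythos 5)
The paper does not actually prove Theorem~\ref{th:bik=fp}: its proof is a one-line citation to Theorems~6.1 and~9.3 of \cite{Benson/Iyengar/Krause/Pevtsova:2015b}, so there is no in-paper argument to compare against. The content of the theorem is the technical heart of that companion paper.

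Your outline tracks the broad contours of the cited argument (reduction to closed points over a field extension, passage to a quasi-elementary subgroup scheme through a suitable representative $\pi$-point, support before cosupport), but it has two genuine gaps. First, the transfer from the quasi-elementary case back to $G$ is not a routine consequence of the adjunctions in Lemma~\ref{le:subgroup}: one needs to know that non-projectivity of $\alpha_\fp^*(M_K)$, respectively $\alpha_\fp^*(M^K)$, is equivalent to non-vanishing of $\gam_\fp M$, respectively $\lam^\fp M$, where $\gam_\fp$ and $\lam^\fp$ are defined via the action of $H^*(G,k)$ and not via the cohomology of the subgroup scheme. Supplying that comparison is exactly where the companion paper invokes the detection theorems of Suslin~\cite{Suslin:2006a} and of Bendel, Friedlander, and Suslin~\cite{Bendel/Friedlander/Suslin:1997b}; your phrase ``one can verify the equality directly at this level'' skips the step that carries all the weight. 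Second, for cosupport you name plausible tools --- the isomorphism~\eqref{eq:tate} and the modules $T_C(I)$ of~\eqref{eq:brown} --- but break off at precisely the point where the work begins: you say one should ``compare'' a completion-vanishing condition with a projectivity condition without saying how the two meet. In the companion paper the cosupport equality is not a second independent reduction but bootstraps from the already-established support equality via the fact, recorded here as Remark~\ref{re:properties}(2), that support and cosupport have the same maximal elements; absent some such bridge, the cosupport half of your outline does not close.
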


\begin{proof}
See Theorems~6.1 and 9.3 in \cite{Benson/Iyengar/Krause/Pevtsova:2015b}.
\end{proof}

For ease of reference we recall basic facts concerning support and cosupport.

\begin{remark}
\label{re:properties}
Let $M$ and $N$ be $G$-modules.
\begin{enumerate}[\quad\rm(1)]
\item $M$ is projective if and only if $\supp_{G}(M)=\varnothing$, if and only if $\cosupp_{G}(M)=\varnothing$.
\item $\supp_{G}(M)$ and $\cosupp_{G}(M)$ have the same maximal elements with respect to inclusion.
\item $\supp_{G}(M\otimes_{k}N)  = \supp_{G}(M)\cap \supp_{G}(N)$.
\item $\cosupp_{G}\Hom_{k}(M,N)  = \supp_{G}(M)\cap \cosupp_{G}(N)$.
\item $\supp_{G}(k) = \Proj H^{*}(G,k) = \cosupp_{G}(k)$.
\end{enumerate}

Keeping in mind Theorem~\ref{th:bik=fp}, parts (1) and (2) are recombinations of \cite[Theorem~5.3 and Corollary~9.4]{Benson/Iyengar/Krause/Pevtsova:2015b}. Parts (3) and (4) are from \cite[Theorem~3.4]{Benson/Iyengar/Krause/Pevtsova:2015a}, while (5) is contained in  \cite[Lemma~3.5]{Benson/Iyengar/Krause/Pevtsova:2015a}.
\end{remark}

\begin{remark}
\label{re:koszul}  
For an ideal $\fa$ in $H^{*}(G,k)$ we write $\mcV(\fa)$  for the closed subset  of those points in $\Proj H^*(G,k)$ corresponding to homogeneous prime ideals containing $\fa$. 

Let $\zeta_1, \ldots, \zeta_n$ be a system of homogeneous generators of an ideal $\fa \subset H^*(G,k)$. By a theorem of Carlson\cite{Carlson:1983}, one has $\supp_G(\kos k{\zeta}) = \mcV(\zeta)$ for any $\zeta \in H^d(G,k)$. The tensor product property, recalled in Remark~\ref{re:properties}, now implies that 
\[
\supp_G(\kos k{\fa}) = \mcV(\zeta_1) \cap \ldots \cap  \mcV(\zeta_n) = \mcV(\fa). 
\]
In particular, for $L$ and $\fq$ as in Theorem~\ref{thm:reduction}, one gets
\[
\supp_{G_L}(\kos L{\fq}) = \mcV(\fq) = \{\sqrt \fq\} \subset \Proj H^*(G_L,L),
\]
since $\sqrt \fq$ is a closed point in $\Proj H^*(G_L,L)$. 
\end{remark}

\section{Point modules}

In this section we discuss a distinguished class of $G$-modules that
correspond to a $\pi$-point. Later on we will see that these modules
serve as cogenerators of colocalising subcategories.

\subsection*{Point modules} Fix a $\pi$-point $\alpha\colon k[t]/(t^p)
\to kG$. The restriction functor
\[ \alpha^{*}\colon \Mod G \lra \Mod k[t]/(t^{p})
\] admits a left adjoint and a right adjoint:
\[ \alpha_*:=kG\otimes_{k[t]/(t^{p})} -\qquad\text{and}
\qquad\alpha_!:=\Hom_{k[t]/(t^{p})}(kG,-)\,.
\] These functors are isomorphic, as the next result asserts. 

\begin{theorem}
\label{th:gorenstein} For any $\pi$-point $\alpha\colon k[t]/(t^p) \to
kG$ and $k[t]/(t^p)$-module $M$, there is a natural isomorphism of
$G$-modules:
\[ \alpha_*(M) \cong \alpha_{!}(M).
\] 
\end{theorem}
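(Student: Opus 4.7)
The strategy is to show that $\alpha$ makes $kG$ into a Frobenius extension of $B:=k[t]/(t^p)$; once this is established, the right adjoint $\alpha_!$ is automatically isomorphic to the left adjoint $\alpha_*$. Because $\alpha$ is a $\pi$-point, $kG$ is flat over $B$, and since $B$ is local this flatness upgrades to freeness of finite rank. Standard tensor-Hom then yields a natural left $kG$-module isomorphism
\[
\alpha_!(M)\;=\;\Hom_B(kG,M)\;\cong\;\Hom_B(kG,B)\otimes_B M,
\]
so the theorem reduces to producing a $(kG,B)$-bimodule isomorphism $\Hom_B(kG,B)\cong kG$.

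To construct such an isomorphism I would produce a $(B,B)$-bimodule map $\sigma\col kG\to B$ (the relative Frobenius form) for which the induced map $a\mapsto\bigl(x\mapsto\sigma(xa)\bigr)$ is a bijection $kG\to\Hom_B(kG,B)$. Starting from a Frobenius form $\eps\col kG\to k$ (for instance a non-zero right integral, which exists because $kG$ is a finite-dimensional Hopf algebra) and the standard form on $B$ picking out the coefficient of $t^{p-1}$, I would set
\[
\sigma(a)\;=\;\sum_{i=0}^{p-1}\eps\bigl(\alpha(t)^{p-1-i}a\bigr)\,t^i.
\]
Left $B$-linearity is built into the formula, while right $B$-linearity (i.e., $\sigma(a\alpha(t))=\sigma(a)\,t$) reduces by a direct bookkeeping to the single identity $\eps(y\alpha(t))=\eps(\alpha(t)y)$ for all $y\in kG$, together with $\alpha(t)^p=0$. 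Non-degeneracy, hence bijectivity by a dimension count, follows from the top coefficient: if $\sigma(xa)=0$ for every $x\in kG$, then $\eps(xa)=0$ for every $x$, forcing $a=0$ by non-degeneracy of $\eps$.

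The main obstacle is therefore the symmetry condition $\eps(y\alpha(t))=\eps(\alpha(t)y)$, equivalently the assertion that the Nakayama automorphism $\nu$ of $kG$ fixes $\alpha(t)$. For a general finite group scheme $kG$ is Frobenius but not symmetric, so this is a genuine constraint. To handle it I would invoke the defining property of a $\pi$-point: $\alpha$ factors through the group algebra $kU$ of a unipotent abelian subgroup scheme $U\subseteq G$. Since $kG$ is cocommutative, $S^2=\id$, and the Nakayama automorphism takes the form $\nu(a)=\sum a_{(1)}\gamma(a_{(2)})$, where $\gamma\col kG\to k$ is the modular function; under the duality $(kG)^*=k[G]$ this $\gamma$ corresponds to a character of $G$. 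Because $U$ is unipotent it admits no nontrivial characters, so the restriction of $\gamma$ to the sub-Hopf-algebra $kU$ coincides with the counit. As $\alpha(t)\in kU$, the counit axiom gives $\nu(\alpha(t))=\sum\alpha(t)_{(1)}\eps(\alpha(t)_{(2)})=\alpha(t)$, completing the verification and hence the proof.
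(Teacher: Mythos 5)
Your proof is correct, but takes a genuinely different route from the paper's. The paper argues indirectly: it first treats abelian $G$, where the chain $\Hom_R(kG,R)\cong\Hom_k(kG,k)\cong kG$ coming from the Frobenius property of $kG$ is automatically compatible with the right $R$-action (since $\alpha(t)$ is then central), and reduces general $G$ to this case by factoring $\alpha$ through an abelian unipotent $U$ and invoking $\ind^G_U\cong\coind^G_U$; that last step, like yours, rests on the triviality of characters of a unipotent group scheme, but is applied through Jantzen's formula I.8.17 rather than through the Nakayama automorphism. You instead construct an explicit relative Frobenius form $\sigma\colon kG\to B$ and concentrate the entire difficulty into the single identity $\nu(\alpha(t))=\alpha(t)$. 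Both routes use the same two ingredients --- $kG$ Frobenius and no nontrivial characters on unipotent subgroup schemes --- but yours needs $U$ only to be unipotent, not abelian (the factorization serves only to see that the modular character restricts trivially to $kU$), and produces a concrete bimodule isomorphism rather than an existence statement. The cost is that you lean on the Hopf-theoretic description of the Nakayama automorphism of a finite-dimensional Hopf algebra (reducing, via $S^2=\id$ and the triviality of the distinguished group-like of $kG$ in the cocommutative case, to a twist by the modular character); that formula is a substantive fact of Radford type and would need an explicit citation, whereas the paper's proof avoids it. One small notational slip: you use $\eps$ both for the integral defining your Frobenius form and, at the end, for the counit of $kU$; these are distinct maps and should be given different names.
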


\begin{proof} 
  It is convenient to set $R:=k[t]/(t^{p})$. It is easy to verify that
  the $R$-module $\Hom_{k}(R,k)$ is isomorphic to $R$. This will be
  used further below. We will also use the fact that $kG$ is a
  Frobenius algebra, that is to say that there is an isomorphism of
   $G$-modules
\[
kG\cong \Hom_{k}(kG,k).
\]
See \cite[Lemma~I.8.7]{Jantzen:2003a}, and also \cite[Chapter VI,
Theorem~3.6]{Skowronski/Yamagata:2011a}. This justifies the third step
in the following chain of isomorphisms of  $G$-modules:
\begin{equation}
\label{eq:frobenius}
\Hom_{R}(kG,R)\cong \Hom_{R}(kG,\Hom_{k}(R,k))\cong \Hom_{k}(kG,k)\cong kG.
\end{equation}
The second is standard adjunction.

We are now ready to justify the stated result.  Consider first the
case when $G$ is abelian. Then $kG$ and $\Hom_{R}(kG,R)$ also have
$G^\op$-actions. As $G$ is abelian, the
isomorphism~\eqref{eq:frobenius} is compatible with these
structures. It follows that it is also compatible with the induced
$R^\op$-actions on $kG$ and $\Hom_{R}(kG,R)$. This justifies the
second isomorphism below
\[
\alpha_{!}(M) = \Hom_{R}(kG,M) \cong \Hom_{R}(kG,R)\otimes_{R}M \cong kG\otimes_{R} M = \alpha_{*}(M).
\]
The first isomorphism holds because $kG$ is a finitely generated projective $R$-module. The composition of the maps is the desired isomorphism.
 
Let now $G$ be an arbitrary finite group scheme. By definition, the
$\pi$-point $\alpha$ factors as $R \xra{\beta} kU \hookrightarrow kG$, where
$U$ is an unipotent abelian subgroup scheme of $G$.  Note that
$\beta_{*}=\beta_{!}$ by what we have already verified, since $U$ is
abelian. Observing that $\alpha_*=\coind^G_U\beta_*$ and
$\alpha_!=\ind^G_U\beta_!$, it thus remains to show that
$\coind_U^G\cong\ind_U^G$. By \cite[I.8.17]{Jantzen:2003a}, there is
an isomorphism
\[ 
\coind_U^G (M) \cong \ind_U^G (M \otimes_k (\delta_G){\da_U}\delta_U^{-1})
\] 
where $\delta_G$ and $\delta_U$ are certain characters of $G$ and $U$,
respectively. Since $U$ is a unipotent group scheme, it has no
nontrivial characters; see, for example,
\cite[8.3]{Waterhouse:1979a}. This yields the last claim and therefore
the proof is complete.
\end{proof}

\begin{definition} 
Let $K$ be a field extension of $k$ and $\alpha\colon K[t]/(t^p)\to KG$ a $\pi$-point. We call the $G$-module
\[ 
\chr_G(\alpha):=\res^{K}_{k}\alpha_*(K)\cong \res^{K}_{k}\alpha_!(K)
\] 
the \emph{point module} corresponding to $\alpha$.
\end{definition}

As an example, we describe the point modules for the Klein four group, following the description of the $\pi$-points in \cite[Example~2.3]{Friedlander/Pevtsova:2007a}; see also \cite[Example~2.6]{Benson/Iyengar/Krause/Pevtsova:2015a}.

\begin{example}
\label{ex:klein} 
Let $V=\bbZ/2 \times \bbZ/2$ and $k$ a field of characteristic two. The group algebra $kV$ is isomorphic to $k[x,y]/(x^{2},y^{2})$,
where $x+1$ and $y+1$ correspond to the generators of $V$, and $\Proj H^{*}(V,k)\cong \bbP^{1}_{k}$. A $kV$-module $M$ is given by a
$k$-vector space together with two $k$-linear endomorphisms $x_M$ and $y_M$ representing the action of $x$ and $y$ respectively.

For each closed point $\fp\in \bbP^1_k$ there is some finite field extension $K$ of $k$ such that $\bbP^1_K$ contains a rational point
$[a,b]$ over $\fp$ (using homogeneous coordinates). The $\pi$-point corresponding to $\fp$ is represented by the map of $K$-algebras
\[ 
K[t]/(t^{p})\lra K[x,y]/(x^{2},y^{2})\quad\text{where $t\mapsto ax + by$,}
\]
and the corresponding point module is given by $\Delta=K\oplus K$ together with
\[
x_\Delta=\left[\begin{matrix}0&0\\ b&0\end{matrix}\right]
\quad\text{and}\quad
y_\Delta=\left[\begin{matrix}0&0\\ a&0\end{matrix}\right].
\]
Now let $K$ denote the field of rational functions in a variable
$s$. The generic point of $\bbP^{1}_{k}$ then corresponds to the map
of $K$-algebras
\[ 
K[t]/(t^{p})\lra K[x,y]/(x^{2},y^{2}) \quad\text{where $t\mapsto x+sy$,}
\]
and the corresponding point module is given by $\Delta=K\oplus K$ together with
\[
x_\Delta=\left[\begin{matrix}0&0\\ s&0\end{matrix}\right]
\quad\text{and}\quad
y_\Delta=\left[\begin{matrix}0&0\\ 1&0\end{matrix}\right].
\]
\end{example}

The next example illustrates that the $G$-module $\chr_{G}(\alpha)$ depends on $\alpha$ and not only on
the point in $\Proj H^{*}(G,k)$ that it represents.

\begin{example} 
Let $k$ be a field of characteristic $p\ge 3$ and set $G:=\bbZ/p \times \bbZ/p$. Thus, $kG= k[x,y]/(x^p, y^p)$ and
$\Proj H^{*}(G,k)=\bbP_k^{1}$. The homomorphism
\begin{align*} 
&\alpha_\lambda\colon k[t]/(t^{p})\lra kG \quad \text{where }
t\mapsto x - \lambda y^{2}
\end{align*} 
defines a $\pi$-point for any $\lambda \in k$, corresponding to the same point in $\bbP_k^{1}$, namely $[1,0]$. On the other hand, the point modules
\[ 
\chr_{G}(\alpha_\lambda)\cong k[x,y]/(x-\lambda y^{2},y^{p}).
\]
are pairwise non-isomorphic; for example, their annihilators
differ. They are also indecomposable, because they are cyclic and $kG$
is a local ring.
\end{example}

The next example shows that point modules need not be indecomposable.

\begin{example} 
  Let $k$ be a field of characteristic $3$ and set
  $G:=\Sigma_3\times\bbZ/3$. The $\pi$-point
  $\alpha\colon k[t]/(t^{3})\to kG$ given by the inclusion
  $\bbZ/3\hookrightarrow G$ as a direct factor yields a point module
  $\Delta_G(\alpha)$ that decomposes into two non-isomorphic
  indecomposable $G$-modules, because it is isomorphic to $k\Sigma_3$.
\end{example}

\subsection*{Endofinite modules}
Let $G$ be a group scheme defined over $k$. A point module defined over a field extension $K$ is finite dimensional, as a $G$-module, if and only if $K$ is finite dimensional over $k$.  Nonetheless, point modules always enjoy a strong finiteness property because they arise as
restrictions of finite dimensional modules.

Let $A$ be any ring. Following Crawley-Boevey \cite{Crawley-Boevey:1991a, Crawley-Boevey:1992a}, we say that an $A$-module $M$
is \emph{endofinite} if it has finite composition length when viewed as a module over its endomorphism ring $\End_A(M)$. The following
result, due to Crawley-Boevey, collects some of the basic properties of endofinite modules. The proof employs the fact that endofinite modules are $\Sigma$-pure-injective.

\begin{theorem}
\label{th:endofinite}
An indecomposable endofinite module has a local endomorphism ring  and any endofinite module can be written essentially uniquely as a
direct sum of indecomposable endofinite modules. Conversely, a direct sum of endofinite modules is endofinite if and only if there are only finitely many isomorphism classes of indecomposables involved. 

  The class of endofinite modules is closed under finite direct sums,
  direct summands, and arbitrary products or direct sum of copies of
  one module.
\end{theorem}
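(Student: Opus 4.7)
The plan is to follow Crawley-Boevey's approach, whose decisive input is that every endofinite module is $\Sigma$-pure-injective. To establish this I would argue that if $M$ has finite length over $\End_{A}(M)$, then for each $n\ge 1$ the lattice of subgroups of $M^{n}$ defined by positive primitive (pp) formulas satisfies the descending chain condition: every pp-subgroup of $M^{n}$ is $\End_{A}(M)$-invariant under the diagonal action, and $M^{n}$ retains finite $\End_{A}(M)$-length under this action, so its pp-subgroup lattice is a sublattice of an Artinian one. By Zimmermann's theorem, this dcc on pp-subgroups characterises $\Sigma$-pure-injectivity, meaning that every coproduct $M^{(I)}$ is pure-injective.

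Granted $\Sigma$-pure-injectivity, the structural assertions reduce to the general theory of pure-injective modules. For any pure-injective $N$, idempotents lift modulo the Jacobson radical of $\End_{A}(N)$, and a $\Sigma$-pure-injective module admits a decomposition as a direct sum of indecomposable pure-injectives, each having a local endomorphism ring. This yields the first assertion; Azumaya's theorem on decompositions into summands with local endomorphism rings then gives essential uniqueness in the second.

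For the direct sum statement, I would invoke additivity of endolength with respect to the decomposition furnished above. Each isomorphism class of indecomposable summand appearing in $M$ contributes a strictly positive increment to the endolength of $M$, so finite endolength permits only finitely many such classes. Conversely, if $M$ decomposes as a finite sum $\bigoplus_{i=1}^{n}N_{i}^{(I_{i})}$ with the $N_{i}$ indecomposable endofinite and pairwise non-isomorphic, then $M$ is endofinite, because the pp-subgroup lattice of $N^{(I)}$ coincides with that of $N$, so $\Sigma$-pure-injectivity and finite endolength are preserved under arbitrary coproducts of copies of a single endofinite module. The asserted closure properties under finite direct sums, direct summands, and arbitrary products or direct sums of copies of one module follow from the same observation together with the elementary facts that endolength is sub-additive and inherited by summands.

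The main obstacle is the first step, namely the passage from the endofinite hypothesis to $\Sigma$-pure-injectivity. This equivalence belongs to the model theory of modules and relies on pp-formula techniques that are otherwise extraneous to the present paper; this is why Crawley-Boevey's original treatment in \cite{Crawley-Boevey:1991a, Crawley-Boevey:1992a} serves as the natural reference, and the theorem is quoted rather than reproved here.
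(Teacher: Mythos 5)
Your sketch is faithful to the route the paper has in mind: the paper's proof is simply a citation to Crawley--Boevey's work, and the surrounding text explicitly flags $\Sigma$-pure-injectivity as the decisive input, which is exactly the pivot of your argument. The chain you give (pp-lattice dcc via Zimmermann, decomposition into indecomposable pure-injectives with local endomorphism rings, Azumaya-type uniqueness, invariance of endolength under coproducts of copies) is an accurate reconstruction of the cited proof, and your closing remark that the theorem is quoted rather than reproved matches the paper.
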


\begin{proof}
See Section~1.1 in  \cite{Crawley-Boevey:1991a} and Section~4 in \cite{Crawley-Boevey:1992a}.
\end{proof}

For an $A$-module $M$, we write $\Add(M)$ for the full subcategory of $A$-modules that are direct summands of direct sums of copies of
$M$. Analogously, $\Prod(M)$ denotes the subcategory of all direct summands of products of copies of $M$. For an endofinite module $M$ it follows from Theorem~\ref{th:endofinite} that $\Add(M)$ and $\Prod(M)$ coincide: they consist of all direct sums of indecomposable direct
summands of $M$.  This observation explains the formal part of the following proposition.

\begin{proposition}
\label{pr:dual}
For any $\pi$-point $\alpha$ of $G$,  the $G$-module $\chr_{G}(\alpha)$ is endofinite and there is an equality
\[
\Add(\chr_G(\alpha)) =\Prod(\chr_G(\alpha))\,.
\]
\end{proposition}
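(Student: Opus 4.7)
The plan is to reduce endofiniteness to a finite $K$-dimension estimate and let the formal statement $\Add(\chr_G(\alpha))=\Prod(\chr_G(\alpha))$ fall out of Theorem~\ref{th:endofinite}. First I would unpack what $\alpha_{*}(K)=KG\otimes_{K[t]/(t^{p})}K$ looks like as a $K$-vector space. Since $\alpha$ is a $\pi$-point, $KG$ is flat, hence free, over $K[t]/(t^{p})$, so $\alpha_{*}(K)$ has $K$-dimension $\dim_{K}(KG)/p$. In particular $\alpha_{*}(K)$ is a finite dimensional $KG$-module, even though $\chr_{G}(\alpha)=\res^{K}_{k}\alpha_{*}(K)$ may be infinite dimensional over $k$.

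Next I would produce a large enough subring of $\End_{kG}(\chr_G(\alpha))$. The $KG$-module structure on $\alpha_{*}(K)$ extends the $kG$-module structure on $\chr_{G}(\alpha)$, and $K$ lies in the centre of $KG=K\otimes_{k}kG$; so scalar multiplication by elements of $K$ gives $kG$-linear endomorphisms. This yields an inclusion
\[
K\;\subseteq\;\End_{KG}(\alpha_{*}(K))\;\subseteq\;\End_{kG}(\chr_{G}(\alpha)).
\]
In particular, every $\End_{kG}(\chr_{G}(\alpha))$-submodule of $\chr_{G}(\alpha)$ is automatically a $K$-subspace.

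From here the conclusion is immediate: the length of $\chr_{G}(\alpha)$ as a module over $\End_{kG}(\chr_{G}(\alpha))$ is bounded by its length as a $K$-vector space, which is $\dim_{K}\chr_{G}(\alpha)=\dim_{K}(KG)/p<\infty$. Hence $\chr_{G}(\alpha)$ is endofinite. Finally, the equality $\Add(\chr_G(\alpha))=\Prod(\chr_G(\alpha))$ is the formal consequence, already spelled out immediately before the statement, of Theorem~\ref{th:endofinite}: an endofinite module decomposes essentially uniquely as a direct sum of indecomposable endofinite modules with local endomorphism rings, and both $\Add$ and $\Prod$ of such a module coincide with the class of arbitrary direct sums of its indecomposable summands.

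The only step that is not completely routine is checking that scalar multiplication by $K$ really does give $kG$-linear endomorphisms of the restricted module; everything else is a dimension count and an appeal to Crawley-Boevey's theorem. I do not anticipate any genuine obstacle, since the argument is essentially a restriction-of-scalars observation combined with the fact that $K[t]/(t^{p})$-flatness of $KG$ forces $\alpha_{*}(K)$ to be finite dimensional over $K$.
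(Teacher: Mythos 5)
Your argument is exactly the one the paper gives: both derive endofiniteness by observing that the central $K$-action makes $\chr_G(\alpha)$ a $kG$-$K$-bimodule, giving a ring map $K\to\End_G(\chr_G(\alpha))$, so that endolength is bounded by $\dim_K\chr_G(\alpha)=\tfrac1p\dim_K(KG)<\infty$, and then cite Crawley-Boevey's theorem for the $\Add=\Prod$ statement. The proposal is correct and essentially identical in approach.
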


\begin{proof}
Let $\alpha\colon K[t]/(t^p)\to KG$ be the given $\pi$-point.  Then $\chr_G(\alpha)$ is a $kG$-$K$-bimodule and there is a homomorphism of rings $K\to \End_{G}(\chr_G(\alpha))$. In particular, $\dim_{K}(\chr_{G}(\alpha))$ is an upper bound for the length of $\chr_G(\alpha)$ as a module over $\End_G(\chr_G(\alpha))$.  Since one has inequalities
\[
\dim_{K}(\chr_{G}(\alpha)) = \frac 1p\dim_{K}(KG) \leq \dim_{K}(KG) < \infty\,,
\]
it follows that $\chr_{G}(\alpha)$ is endofinite. The remaining assertion is by Theorem~\ref{th:endofinite}.
\end{proof}

\subsection*{Support and cosupport}
Next we explain how point modules can be used to compute support and cosupport; this is partly why we are interested in them.

\begin{proposition}
\label{pr:cosupp-point} 
Let $\alpha$ be a $\pi$-point corresponding to
$\fp\in\Proj H^*(G,k)$ and $M$ a $G$-module.  The following statements
are equivalent.
\begin{enumerate}[\quad\rm(1)]
\item $\fp\not\in\cosupp_G(M)$;
\item $\Hom_{k}(\chr_{G}(\alpha),M)$ is projective;
\item $\sHom_{G}(\chr_{G}(\alpha),M)= 0$; 
\item $\sHom_{G}^{*}(\chr_{G}(\alpha),M)= 0$.  \qed
\end{enumerate}
\end{proposition}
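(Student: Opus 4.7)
The plan is to establish the cycle $(1)\Rightarrow(2)\Rightarrow(4)\Rightarrow(3)\Rightarrow(1)$. The first implication follows immediately from the cosupport-of-Hom formula in Remark~\ref{re:properties}(4), once one invokes $\supp_G(\chr_G(\alpha))=\{\fp\}$, which is recorded in the introduction citing \cite{Benson/Iyengar/Krause/Pevtsova:2015b}. Under (1) one obtains $\cosupp_G\Hom_k(\chr_G(\alpha),M)=\{\fp\}\cap\cosupp_G(M)=\varnothing$, so $\Hom_k(\chr_G(\alpha),M)$ is projective by Remark~\ref{re:properties}(1).

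For $(2)\Rightarrow(4)$, I would apply the tensor-Hom adjunction in $\StMod G$ to identify $\sHom_G^*(\chr_G(\alpha),M)$ with $\sHom_G^*(k,\Hom_k(\chr_G(\alpha),M))$; this adjunction descends to the stable category because $\Hom_k(B,P)$ is injective-hence-projective whenever $P$ is, using that $kG$ is Frobenius. The right-hand side vanishes since Tate cohomology vanishes on projectives. The implication $(4)\Rightarrow(3)$ is trivial.

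The computational heart is $(3)\Rightarrow(1)$. Writing $\chr_G(\alpha)=\res^K_k\alpha_*(K)$ and combining the adjunctions $(\res^K_k,(-)^K)$ and $(\alpha_*,\alpha^*)$---both of which descend to the stable category since all functors involved preserve projectives (Frobenius again for the first; flatness of $KG$ over $K[t]/(t^p)$ via $\alpha$ for the second)---yields
\[
\sHom_G(\chr_G(\alpha),M) \;\cong\; \sHom_{K[t]/(t^p)}\bigl(K,\,\alpha^*(M^K)\bigr).
\]
Setting $R = K[t]/(t^p)$, the right-hand side is $\Ker(t)/\Im(t^{p-1})$ evaluated on $\alpha^*(M^K)$. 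Since $R$ has finite representation type, any $R$-module decomposes into Jordan-block summands $R/(t^j)$ with $1\le j\le p$, and a direct count shows $\Ker(t)/\Im(t^{p-1})$ vanishes precisely when every summand is free, i.e.\ when the module is projective over $R$. Combined with the definition of $\pi$-cosupport and the identification $\picosupp=\cosupp$ from Theorem~\ref{th:bik=fp}, this delivers (1). The main technical point will be verifying carefully that the two adjunctions descend to the stable category; once that is in place, the Jordan-block observation is elementary and the remaining bookkeeping is formal.
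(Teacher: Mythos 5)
Your proof is correct, and it departs from the paper's in two interesting ways. First, the paper does not prove the equivalences $(1)\Leftrightarrow(2)\Leftrightarrow(3)$; it cites them from \cite[Lemma~9.2]{Benson/Iyengar/Krause/Pevtsova:2015b} and only verifies $(1)\Leftrightarrow(4)$ directly. You reconstruct the whole cycle from the ambient machinery: $(1)\Rightarrow(2)$ falls out of $\cosupp_G\Hom_k(\chr_G(\alpha),M)=\supp_G(\chr_G(\alpha))\cap\cosupp_G(M)=\{\fp\}\cap\cosupp_G(M)$ together with the projectivity criterion, and $(2)\Rightarrow(4)$ is the tensor--Hom adjunction in $\StMod G$ (valid since $\Hom_k(B,-)$ preserves projectives for $B$ a $G$-module, $kG$ being self-injective). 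This makes the proof more self-contained. Second, for the key adjunction chain your $(3)\Rightarrow(1)$ computes the same two isomorphisms as the paper's $(1)\Leftrightarrow(4)$, but in degree $0$ rather than over all shifts. The paper's graded version concludes via the cheap observation that $\stmod K[t]/(t^p)=\Thick(K)$ and $\StMod K[t]/(t^p)$ is compactly generated, so $\sHom^*_{K[t]/(t^p)}(K,N)=0$ forces $N=0$ stably. Your degree-$0$ version is a genuinely stronger vanishing criterion and therefore requires the structure theorem that \emph{every} $K[t]/(t^p)$-module (including the typically infinite-dimensional $\alpha^*(M^K)$) decomposes as a direct sum of cyclics $R/(t^j)$ --- a true but non-elementary fact (Ringel--Tachikawa, or Köthe/Prüfer for commutative Artinian principal ideal rings). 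That is the one point where your proposal silently leans on a real theorem; it is safer to flag it or to use the graded criterion, which avoids the decomposition entirely. Otherwise the adjunction bookkeeping (both pairs descend to stable categories because all four functors preserve projectives, via the Frobenius property and flatness of $KG$ over $K[t]/(t^p)$) is exactly right.
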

\begin{proof} 
  The equivalences (1) $\Leftrightarrow$ (2) $\Leftrightarrow$ (3) are
  \cite[Lemma~9.2]{Benson/Iyengar/Krause/Pevtsova:2015b}.  

(1) $\Leftrightarrow$ (4): With $\alpha$ the map $K[t]/(t^p)\to KG$
adjunctions yield isomorphisms
\[ 
\sHom^*_G(\res^K_k\alpha_*(K),M)\cong \sHom^*_{G_K}(\alpha_*(K),M^K)
\cong \sHom^*_{K[t]/(t^p)}(K,\alpha^{*}(M^K)).
\] 
Clearly, the right hand term vanishes if and only if $\alpha^{*}(M^K)$ is projective.
\end{proof}

Here is the analogous statement for supports. As in the context of
commutative rings, one can use also tensor products with the point
modules to detect support.

\begin{proposition}
\label{pr:supp-point} 
Let $\alpha$ be a $\pi$-point corresponding to
$\fp\in\Proj H^*(G,k)$ and $M$ a $G$-module.  The following statements
are equivalent.
\begin{enumerate}[\quad\rm(1)]
\item $\fp\not\in\supp_G(M)$;
\item $\chr_{G}(\alpha)\otimes_{k}M$ is projective;
\item $\Hom_{k}(M,\chr_{G}(\alpha))$ is projective;
\item $\sHom_G(M,\chr_G(\alpha) )= 0$;
\item $\sHom_G^{*}(M,\chr_G(\alpha) )= 0$.
\end{enumerate}
\end{proposition}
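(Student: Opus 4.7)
The plan is to reduce each condition to a statement about $R$-modules where $R := K[t]/(t^p)$, and then to combine this with the tensor and Hom formulas for support and cosupport recorded in Remark~\ref{re:properties}. By Theorem~\ref{th:bik=fp}, condition (1) is equivalent to $\alpha^*(M_K)$ being projective over $R$, so the task is to translate the other four conditions into the same $R$-theoretic statement.

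The first step is to establish the natural isomorphism
\[
\sHom_G^*(M, \chr_G(\alpha)) \cong \sHom_R^*(\alpha^* M_K, K).
\]
Writing $\chr_G(\alpha) = \res^K_k\alpha_*(K)$, the adjunction $(-)_K \dashv \res^K_k$ strips off the restriction, while the identification $\alpha_*\cong\alpha_!$ from Theorem~\ref{th:gorenstein} allows the $\alpha^*\dashv\alpha_!$ adjunction to move $\alpha^*$ across to $M_K$. From this one reads off the equivalences (1) $\Leftrightarrow$ (4) $\Leftrightarrow$ (5) as follows: it suffices to verify that an $R$-module $N$ is projective iff $\sHom_R(N,K)=0$ iff $\sHom_R^*(N,K)=0$. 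The forward implications are trivial. For the converse, K\"othe's structure theorem for the uniserial artinian ring $R$ allows one to write any $N$ as a direct sum of cyclic modules $K[t]/(t^i)$ with $i \in \{1,\dots,p\}$; any non-projective summand ($i<p$) admits a nonzero quotient map to $K$ that does not factor through any copy of $R$, since every map $K[t]/(t^i)\to R$ has image in $t^{p-i}R \subseteq \ker(R\to K)$.

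For (1) $\Leftrightarrow$ (2), the introduction records that $\supp_G(\chr_G(\alpha))=\{\fp\}$; together with Remark~\ref{re:properties}(3) this gives $\supp_G(\chr_G(\alpha) \otimes_k M) = \{\fp\}\cap\supp_G(M)$, empty exactly when (1) holds, and projectivity is detected by emptiness of support via Remark~\ref{re:properties}(1).

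For (1) $\Leftrightarrow$ (3), the key intermediate fact is that $\cosupp_G(\chr_G(\alpha))=\{\fp\}$. This is obtained by combining Proposition~\ref{pr:cosupp-point} with the equivalence (1) $\Leftrightarrow$ (4) already in hand, applied to the test module $\chr_G(\beta)$ for an arbitrary prime $\fq$ with representing $\pi$-point $\beta$: $\fq \in \cosupp_G(\chr_G(\alpha))$ iff $\sHom_G(\chr_G(\beta), \chr_G(\alpha))\neq 0$ iff $\fp \in \supp_G(\chr_G(\beta))=\{\fq\}$. Remark~\ref{re:properties}(4) then produces $\cosupp_G(\Hom_k(M,\chr_G(\alpha))) = \supp_G(M)\cap\{\fp\}$, and Remark~\ref{re:properties}(1) closes the loop. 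The main technical hurdle is the asymmetry with Proposition~\ref{pr:cosupp-point}: there $K$ appears on the left of $\sHom$ and projectivity falls out of Tate cohomology, but here $K$ sits on the right and $\alpha^* M_K$ is typically infinite dimensional over $K$, which is why the K\"othe decomposition cannot be avoided.
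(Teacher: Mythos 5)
Your proposal is correct, and for (1) $\Leftrightarrow$ (2), (4), (5) it matches the paper's argument: the same adjunction chain reduces these to the vanishing of stable Hom over $R = K[t]/(t^p)$, and the same tensor/support formula handles (2). The one place you spell out what the paper calls ``clear'' is the claim that an $R$-module $N$ with $\sHom_R(N,K)=0$ is projective; your K\"othe decomposition into cyclics and the observation that any map $K[t]/(t^i)\to R$ with $i<p$ has image in $(t^{p-i})\subseteq\ker(R\to K)$ is a valid way to discharge this, though one could alternatively invoke that $K$ (co)generates $\StMod R$ because $R$ is a commutative self-injective local ring of finite representation type.

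Where you genuinely diverge is (1) $\Leftrightarrow$ (3). The paper proves (1) $\Rightarrow$ (3) directly: it tests the module $\Hom_k(M,\chr_G(\alpha))$ against finite-dimensional $C$, rewrites $\sHom_G(C,\Hom_k(M,\chr_G(\alpha)))\cong\sHom_G(C\otimes_k M,\chr_G(\alpha))$, applies the already-proved (1) $\Leftrightarrow$ (4) to $C\otimes_k M$, and concludes projectivity; then (3) $\Rightarrow$ (4) is immediate. You instead bootstrap from (1) $\Leftrightarrow$ (4) and Proposition~\ref{pr:cosupp-point} to first establish $\cosupp_G(\chr_G(\alpha))=\{\fp\}$ (which the paper defers to Corollary~\ref{co:point-dual}), and then read off (1) $\Leftrightarrow$ (3) from Remark~\ref{re:properties}(4) and (1). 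Both routes are sound and there is no circularity in yours since you only use parts of the proposition already proved; the advantage of your argument is that it is symmetric with your (1) $\Leftrightarrow$ (2) derivation, at the cost of front-loading a fact the paper presents as a downstream corollary. One small citation nit: the input $\supp_G(\chr_G(\alpha))=\{\fp\}$ is not ``recorded in the introduction'' so much as it is \cite[Lemma~9.1]{Benson/Iyengar/Krause/Pevtsova:2015b}, which is what the paper cites.
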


\begin{proof}
(1) $\Leftrightarrow$ (2): Since $\supp_{G}(\chr_{G}(\alpha))=\{\fp\}$, by \cite[Lemma~9.1]{Benson/Iyengar/Krause/Pevtsova:2015b}, Remark~\ref{re:properties}(3) yields the first equivalence below.
\begin{align*}
\fp \not\in\supp_{G}(M) 
	& \iff \supp_{G}(\chr_{G}(\alpha)\otimes_{k}M)=\varnothing \\
	& \iff \chr_{G}(\alpha)\otimes_{k}M \quad \text{is projective}.
\end{align*}
The second one holds because support detects projectivity, by Remark~\ref{re:properties}(1).
 
(1) $\Leftrightarrow$ (4): With $\alpha$ the map $K[t]/(t^p)\to KG$
adjunctions yield isomorphisms
\[ 
\sHom_G(M,\res^K_k\alpha_!(K))\cong \sHom_{G_K}( M_K,\alpha_!(K)) \cong \sHom_{K[t]/(t^p)}(\alpha^{*}(M_K),K).
\] 
Clearly, the right hand term vanishes if and only if $\alpha^{*}(M_K)$ is projective.

(1) $\Leftrightarrow$ (5) is analogous to (1) $\Leftrightarrow$ (4).

(1) $\Rightarrow$ (3): When $\fp$ is not in $\supp_{G}(M)$, it is not
in $\supp_{G}(C\otimes_{k}M)$ for any finite dimensional $G$-module
$C$, by Remark~\ref{re:properties}(3). Thus, the already established
equivalence of conditions (1) and (4) yields that
\[
\sHom_{G}(C,\Hom_{k}(M,\chr_{G}(\alpha)))\cong
\sHom_{G}(C\otimes_{k}M, \chr_{G}(\alpha))=0\,.
\]
Therefore $\Hom_{k}(M,\chr_{G}(\alpha))$ is projective.

(3) $\Rightarrow$ (4): This is clear. 
\end{proof}

In the next result, the claim about the support of $\chr_{G}(\alpha)$ is from \cite[Lemma~9.1]{Benson/Iyengar/Krause/Pevtsova:2015b}, and has been used in the proofs of the Propositions~\ref{pr:cosupp-point} and \ref{pr:supp-point}.

\begin{corollary}
\label{co:point-dual} 
Let $\alpha$ be a $\pi$-point of $G$. A $\pi$-point $\beta$ of $G$ is equivalent to $\alpha$ if and only if $\sHom_{G}^{*}(\chr_{G}(\beta),\chr_{G}(\alpha))\ne 0$.  In particular, there are equalities
\[
\supp_G(\chr_G(\alpha))=\{\fp\}=\cosupp_G(\chr_G(\alpha))
\]
where $\fp$ is the point in $\Proj H^*(G,k)$ corresponding to $\alpha$.
\end{corollary}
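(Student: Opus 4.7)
The plan is to derive both assertions in one pass from Propositions~\ref{pr:cosupp-point} and~\ref{pr:supp-point}, together with the already quoted fact that $\supp_{G}(\chr_{G}(\gamma))=\{\fq\}$ whenever $\gamma$ is a $\pi$-point representing $\fq\in\Proj H^*(G,k)$.

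First, I would attack the biconditional. Fix a $\pi$-point $\alpha$ representing $\fp$ and an arbitrary $\pi$-point $\beta$ representing $\fq$, and apply Proposition~\ref{pr:supp-point} to the module $M=\chr_{G}(\beta)$ with respect to the $\pi$-point $\alpha$. The equivalence (1) $\Leftrightarrow$ (5) there reads
\[
\sHom_{G}^{*}(\chr_{G}(\beta),\chr_{G}(\alpha))=0 \ \iff\ \fp\notin\supp_{G}(\chr_{G}(\beta)).
\]
By the cited support calculation, $\supp_{G}(\chr_{G}(\beta))=\{\fq\}$, so the right hand side is just $\fp\ne\fq$. Since equivalence classes of $\pi$-points are in bijection with the points of $\Proj H^*(G,k)$, this in turn means that $\alpha$ and $\beta$ are not equivalent. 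Taking contrapositive yields the biconditional, and in particular covers both directions at once.

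Next I would use this to pin down the cosupport. For an arbitrary $\pi$-point $\beta$ corresponding to $\fq$, apply Proposition~\ref{pr:cosupp-point} to $M=\chr_{G}(\alpha)$ and the $\pi$-point $\beta$. The equivalence (1) $\Leftrightarrow$ (4) says
\[
\fq\notin\cosupp_{G}(\chr_{G}(\alpha)) \ \iff\ \sHom_{G}^{*}(\chr_{G}(\beta),\chr_{G}(\alpha))=0,
\]
and by the biconditional established in the previous paragraph the right hand side is equivalent to $\beta$ being inequivalent to $\alpha$, that is, to $\fq\ne\fp$. Hence $\cosupp_{G}(\chr_{G}(\alpha))=\{\fp\}$, which gives the desired equality with $\supp_{G}(\chr_{G}(\alpha))$.

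There is no real obstacle in this argument; the only small point to watch is that one must invoke condition (5) of Proposition~\ref{pr:supp-point} (graded $\sHom^*$), rather than condition (4), because the statement of the corollary is formulated in terms of $\sHom_{G}^{*}$. Everything else is bookkeeping with the bijection between $\pi$-point equivalence classes and primes of $\Proj H^*(G,k)$.
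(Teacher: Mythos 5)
Your proof is correct and follows the same route as the paper: both deduce the support claim from Lemma~9.1 of \cite{Benson/Iyengar/Krause/Pevtsova:2015b}, use Proposition~\ref{pr:supp-point} to characterize when $\sHom^*_G(\chr_G(\beta),\chr_G(\alpha))$ vanishes, and then feed that into Proposition~\ref{pr:cosupp-point} to identify the cosupport. Your write-up just spells out the logical chain a bit more explicitly than the paper's compressed one-paragraph argument.
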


\begin{proof} 
  If $\beta$ corresponds to a point $\fq$ in $\Proj H^{*}(G,k)$, then
  $\supp_{G}(\chr_{G}(\beta))=\{\fq\}$, by
  \cite[Lemma~9.1]{Benson/Iyengar/Krause/Pevtsova:2015b}, so
  Proposition~\ref{pr:supp-point} yields that
  $\sHom_{G}^{*}(\chr_{G}(\beta),\chr_{G}(\alpha))$ is non-zero
  precisely when $\fq=\fp$. Given this it follows from
  Proposition~\ref{pr:cosupp-point} that the cosupport of
  $\chr_{G}(\alpha)$ is $\{\fp\}$.
\end{proof}

\section{$\fp$-local and $\fp$-complete objects}

The proof of Theorem~\ref{th:costratification} amounts to showing that for any homogeneous prime ideal $\fp$ of $H^*(G,k)$ the $\fp$-local and $\fp$-complete objects in $\StMod G$ form a minimal Hom closed colocalising subcategory. Here, a Hom closed colocalising
subcategory $\sfC\subseteq\StMod G$ is \emph{minimal} if $\sfC'\subseteq\sfC$ implies $\sfC'=0$ or $\sfC'=\sfC$ for any Hom closed colocalising subcategory $\sfC'\subseteq\StMod G$.

\subsection*{$\fp$-local and $\fp$-complete objects}
We recall from \cite{Benson/Iyengar/Krause:2008a, Benson/Iyengar/Krause:2012b} the definitions and basic facts about
$\fp$-local and $\fp$-complete objects in $\StMod G$.

Fix $\fp\in\Proj H^*(G,k)$.  We write $\gam_\fp\StMod G$ for the full subcategory of $G$-modules $M$ such that
$\gam_\fp(M)\cong M$ and  have from \cite[Corollary~5.9]{Benson/Iyengar/Krause:2008a}
\[
\gam_\fp\StMod G=\{M\in\StMod G\mid\supp_G(M)\subseteq\{\fp\}\}.
\]
From \cite[Corollaries~4.8 and 4.9]{Benson/Iyengar/Krause:2012b}, it follows that a $G$-module $M$ satisfies $\lam^\fp (M)\cong M$ if and only if $M$ is $\fp$-local and $\fp$-complete, and that
\[
\lam^\fp\StMod G=\{M\in\StMod G\mid\cosupp_G(M)\subseteq\{\fp\}\}.
\]
Note that the adjoint pair $(\gam_\fp,\lam^\fp)$ restricts by \cite[Proposition~5.1]{Benson/Iyengar/Krause:2012b} to an equivalence 
\[
\gam_\fp\StMod G\xra{\ \sim\ }\lam^\fp\StMod G.
\]

\subsection*{Cogenerators for $\fp$-local and $\fp$-complete objects}
Given a set $T$ of $G$-modules, let $\Coloc(T)$ denote the smallest colocalising subcategory of $\StMod G$ that contains $T$.  We say that $T$ \emph{cogenerates} a class $\sfC$ of $G$-modules if $\sfC\subseteq \Coloc(T)$.  The class $\sfC$ is \emph{Hom closed} if
for every pair of $G$-modules $M,N$ with $N\in\sfC$, we have $\Hom_k(M,N)\in\sfC$.  We write $\Coloc^{\Hom}(T)$ for the smallest
Hom closed colocalising subcategory that contains $T$.

An object $T$  is a \emph{perfect cogenerator}  of a colocalising subcategory $\sfC\subseteq\StMod G$ if the  the following holds:
\begin{enumerate}[{\quad\rm(1)}]
\item If $M$ is an object in $\sfC$ and $\sHom_G(M,T)=0$ then $M=0$.
\item If a countable family of morphisms $M_i\to N_i$ in
$\sfC$ is such that for all $i$
\[ 
\sHom_G(N_i,T) \lra \sHom_G(M_i,T) 
\] 
is surjective, then so is the induced map
\[ 
\sHom_G(\prod_i N_i,T) \lra \sHom_G(\prod_i M_i,T). 
\]
\end{enumerate}
Any perfect cogenerator is a cogenerator; see \cite[Section~5]{Benson/Iyengar/Krause:2012b}.

Recall from Remark~\ref{re:choosing-pi} that any closed point of $\Proj H^*(G,k)$ is represented by a $\pi$-point $\alpha: K[t]/t^p \to KG$ defined over a finite field extension $K/k$. 

\begin{lemma}
\label{le:pg}  
Let $\alpha\colon K[t]/(t^{p})\to KG$ be a $\pi$-point representing $\fp\in \Proj H^{*}(G,k)$. If $K$ is finite dimensional over $k$, then $\chr_G(\alpha)$  perfectly cogenerates $\lam^\fp\StMod G$.
\end{lemma}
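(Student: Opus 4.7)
The plan is to verify the two defining properties of a perfect cogenerator for $T := \chr_G(\alpha)$ in $\lam^\fp\StMod G$. First I note that $T$ itself lies in this subcategory, since $\cosupp_G(T) = \{\fp\}$ by Corollary~\ref{co:point-dual}.

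For condition (1), I would argue by contraposition, combining the agreement of maximal elements of $\supp_G$ and $\cosupp_G$ from Remark~\ref{re:properties}(2) with Proposition~\ref{pr:supp-point}. If $M$ is a nonzero object of $\lam^\fp\StMod G$, then $\cosupp_G(M)$ is nonempty by Remark~\ref{re:properties}(1) and contained in $\{\fp\}$, so it equals $\{\fp\}$; hence $\fp$ is a maximal element of $\cosupp_G(M)$, and therefore also of $\supp_G(M)$, giving $\fp \in \supp_G(M)$. The implication (1)$\Rightarrow$(4) of Proposition~\ref{pr:supp-point} then yields $\sHom_G(M, T) \ne 0$.

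For condition (2), the idea is to transport the problem to the stable module category of $R := K[t]/(t^p)$. The adjunctions from the proof of Proposition~\ref{pr:supp-point} give a natural isomorphism
\[
\sHom_G(M, T) \cong \sHom_R\bigl(\alpha^*(M_K), K\bigr)
\]
for every $G$-module $M$. Because $K/k$ is finite, the scalar extension $(-)_K = K \otimes_k -$ preserves arbitrary products (since $K$ is finite dimensional over $k$), and the restriction functor $\alpha^*$ also commutes with products. So for any countable family $\{M_i \to N_i\}$ in $\StMod G$, the hypothesis and conclusion of condition (2) transform into the corresponding statements for the induced maps $\alpha^*((M_i)_K) \to \alpha^*((N_i)_K)$ in $\StMod R$ with cogenerator $K$.

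The remaining step, which I expect to be the main technical obstacle, is to show that $K$ is a perfect cogenerator of $\StMod R$. Here $K$ is endofinite over the local artinian algebra $R$ (indeed $\dim_K K = 1$), hence $\Sigma$-pure-injective by Theorem~\ref{th:endofinite}, and it is precisely this pure-injectivity that makes $\sHom_R(-,K)$ compatible enough with products to promote factor-by-factor surjectivity to surjectivity on the product. I expect to carry this out in the style of the parallel argument for finite groups in \cite[Section~5]{Benson/Iyengar/Krause:2012b}, perhaps further exploiting that every object of $\StMod R$ decomposes as a direct sum of copies of the finitely many indecomposables $K[t]/(t^i)$ for $1 \le i \le p-1$.
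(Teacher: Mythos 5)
Your treatment of condition (1) matches the paper exactly. For condition (2) your reduction to $\StMod R$ with $R=K[t]/(t^p)$, via the adjunction $\sHom_G(M,T)\cong\sHom_R(\alpha^*(M_K),K)$ and the observation that both $K\otimes_k-$ (because $\dim_k K<\infty$) and $\alpha^*$ preserve products, is valid and is a genuine variant of the paper's route: the paper instead stops in $\StMod G_K$ at $\sHom_{G_K}(M_K,\alpha_*(K))$ and processes that directly.

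The unresolved part is the one you flag, and the tool you reach for is not the one that does the work. $\Sigma$-pure-injectivity of $K$ is a coproduct-side property (direct sums of copies of $K$ stay pure-injective) and does not by itself force the required surjectivity statement for maps into $K$ out of a \emph{product}; condition (2) is not a pure-injectivity statement, since the $M_i\to N_i$ are arbitrary morphisms, not pure monomorphisms. What closes the gap — both in the paper and in your $\StMod R$ setting — is the duality isomorphism~\eqref{eq:tate}. The paper writes $\sHom_{G_K}(M_K,\alpha_*(K))\cong\sHom_{G_K}(\tau^{-1}\Omega\,\alpha_*(K),M_K)^\vee$; since $\tau^{-1}\Omega\,\alpha_*(K)$ is finite dimensional, the inner covariant Hom commutes with products, surjectivity after $(-)^\vee$ becomes injectivity before it, and a product of injections is injective. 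You can run the same argument in $\StMod R$: $R$ is a commutative (hence symmetric) Frobenius algebra, so $\tau\cong\Omega^2$ and \eqref{eq:tate} gives $\sHom_R(X,K)\cong\sHom_R(\Omega^{-1}K,X)^\vee$ naturally in $X$, after which the same product/injectivity argument finishes. So your plan works, but you should replace the appeal to pure-injectivity (and to the classification of $R$-modules, which is not needed) by this duality step.
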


\begin{proof}
We check the conditions (1) and (2) for  $\chr_G(\alpha)$.

(1) If  $M\in \lam^\fp\StMod G$ is non-zero, then $\cosupp_{G}(M)=\{\fp\}$ and hence $\fp$ is in $\supp_{G}(M)$, by Remark~\ref{re:properties}(2). Thus, $\sHom_G(M,\chr_G(\alpha))\ne 0$, by Proposition~\ref{pr:supp-point}.

 (2) Since extension of scalars is left adjoint to restriction of scalars we have
  \[ 
\sHom_G(M,\chr_G(\alpha)) \cong \sHom_{G_K}(M_K,\alpha_*(K)). 
\]
As $\alpha_*(K)$ is finite dimensional as a $G_K$-module, using the duality isomorphism~\eqref{eq:tate} we may rewrite the right hand term as
\[ 
\sHom_{G_K}( \tau^{-1}\Omega(\alpha_*(K)),M_K)^{\vee}.
\] 
So $\sHom_G(N,\chr_G(\alpha)) \to \sHom_G(M,\chr_G(\alpha))$ is
surjective if and only if
\[ 
\sHom_{G_K}(\tau^{-1}\Omega(\alpha_*(K)),M_K) \lra \sHom_{G_K}(\tau^{-1}\Omega (\alpha_*(K)),N_K) 
\]
is injective. It remains to observe that $M\mapsto M_K$ preserves products as $K$ is finite dimensional over $k$. 
\end{proof}

Let $I$ be an injective $H^*(G,k)$-module and $C$ a finite dimensional $G$-module.  
In what follows, we use the representing objects $T_C(I)$
and the Koszul objects $\kos k{\fp}$ defined in \S\ref{sec:koszul}.

\begin{lemma}
\label{le:TI} Fix a point $\fp$ in $\Proj H^{*}(G,k)$ and $I$ an
injective $H^{*}(G,k)$-module.
\begin{enumerate}[\quad\rm(1)]
\item For any finite dimensional $G$-modules $C$, $M$, there is a natural
isomorphism
\[ \Hom_k(M, T_C(I))\cong T_{\Hom_k(M,C)}(I).
\]
\item With $I$ the injective envelope of $H^{*}(G,k)/\fp$, the modules
$\Hom_k(\kos k{\fp}, T_C(I))$, as $C$ varies over the simple
$G$-modules, perfectly cogenerate $\lam^{\fp}\StMod G$.
\end{enumerate}
\end{lemma}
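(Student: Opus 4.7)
The plan is to handle the two parts separately. For part (1), I would apply Yoneda in $\StMod G$. For any $G$-module $N$, tensor-hom adjunction in $\Mod G$ gives
\[
\sHom_G(N,\Hom_k(M,T_C(I)))\cong \sHom_G(N\otimes_k M,T_C(I))\,,
\]
and the representing property~\eqref{eq:brown} of $T_C(I)$ rewrites the right-hand side as $\Hom_{H^*(G,k)}(\sHom^*_G(C,N\otimes_k M),I)$. On the other hand, by the definition of $T_{\Hom_k(M,C)}(I)$,
\[
\sHom_G(N,T_{\Hom_k(M,C)}(I))=\Hom_{H^*(G,k)}(\sHom^*_G(\Hom_k(M,C),N),I)\,.
\]
The problem reduces to producing a natural isomorphism $\sHom^*_G(\Hom_k(M,C),N)\cong \sHom^*_G(C,N\otimes_k M)$ of graded $H^*(G,k)$-modules, which is standard since $M$ is finite-dimensional, and hence dualisable: $\Hom_k(M,C)\cong M^\vee\otimes_k C$, and $-\otimes_k M^\vee$ is left adjoint to $-\otimes_k M$.

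For part (2), I would first verify that $\Hom_k(\kos k{\fp},T_C(I))$ lies in $\lam^\fp\StMod G$. The representing property of $T_C(I)$ and the choice of $I$ as the injective envelope of $H^*(G,k)/\fp$ imply $\cosupp_G(T_C(I))\subseteq\{\fp\}$, since $\Hom_{H^*(G,k)}(-,I)$ detects only the $\fp$-torsion part of a graded $H^*(G,k)$-module. Combined with $\supp_G(\kos k{\fp})=\mcV(\fp)$ and Remark~\ref{re:properties}(4), this gives $\cosupp_G(\Hom_k(\kos k{\fp},T_C(I)))\subseteq\{\fp\}$. To verify condition~(1) of perfect cogeneration, suppose $M\in\lam^\fp\StMod G$ satisfies $\sHom_G(M,\Hom_k(\kos k{\fp},T_C(I)))=0$ for every simple $C$. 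Tensor-hom adjunction and~\eqref{eq:brown} rewrite this as $\Hom_{H^*(G,k)}(\sHom^*_G(C,M\otimes_k\kos k{\fp}),I)=0$ for every simple $C$. Since $I$ detects $\fp$-torsion and the simples thickly generate the compact objects of $\StMod G$, the vanishing propagates to all compact test objects, yielding $\gam_\fp(M\otimes_k\kos k{\fp})=0$. Using the tensor-splitting $\gam_\fp(M\otimes_k\kos k{\fp})\cong (\gam_\fp M)\otimes_k\kos k{\fp}$ together with $\fp\in\mcV(\fp)=\supp_G(\kos k{\fp})$, this forces $\gam_\fp M=0$; the equivalence $\gam_\fp\StMod G\xra{\ \sim\ }\lam^\fp\StMod G$ then gives $M=0$.

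For condition~(2) of perfect cogeneration, both $\kos k{\fp}$ and each simple $C$ are compact in $\StMod G$, so $-\otimes_k\kos k{\fp}$ and $\sHom^*_G(C,-)$ commute with products. Unwinding the adjunctions, condition~(2) reduces to showing that for a countable family $(K_i)$ of graded $H^*(G,k)$-modules, vanishing of each $\Hom_{H^*(G,k)}(K_i,I)$ implies vanishing of $\Hom_{H^*(G,k)}(\prod_i K_i,I)$; since $\Hom(-,I)=0$ is detected by the $\fp$-torsion functor $\varinjlim_n\Hom(H^*(G,k)/\fp^n,-)$ and each $\Hom(H^*(G,k)/\fp^n,-)$ commutes with products, this follows. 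The main obstacle will be condition~(1): passing from $\Hom$-vanishing tested only against simple $G$-modules $C$ to the conclusion $M=0$ relies delicately on the thick generation of compacts by simples and on the identification of $-\otimes_k\kos k{\fp}$ with a Koszul realisation of local cohomology at $\fp$.
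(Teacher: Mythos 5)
Your proof of part (1) is essentially identical to the paper's: both apply Yoneda/Brown representability, use the dualisability $\Hom_k(M,-)\cong -\otimes_k M^\vee$ of a finite dimensional $M$, and produce the chain of natural isomorphisms in $\sHom_G(-,\Hom_k(M,T_C(I)))$. No significant difference there.

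For part (2), however, the paper does \emph{not} give a self-contained argument: it applies part (1) with $M=\kos k{\fp}$ to identify $\Hom_k(\kos k{\fp},T_C(I))$ with $T_{\Hom_k(\kos k{\fp},C)}(I)$ and then says one can ``mimic the proof of [BIK 2012b, Proposition~5.4].''\ You are reconstructing that proof, and while the overall shape is right, several technical claims need repair. First, $\cosupp_G(T_C(I))\subseteq\{\fp\}$ is not correct in general: with $I$ the injective envelope of $H^*(G,k)/\fp$, one can only conclude that $T_C(I)$ is $\fp$-local (so its cosupport consists of primes contained in $\fp$); it is the presence of the Koszul object $\kos k{\fp}$, whose support is $\mcV(\fp)$, that intersects this down to $\{\fp\}$ via Remark~\ref{re:properties}(4). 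Second, in verifying the perfect-cogeneration condition (1) you propagate vanishing from simple $C$ to arbitrary compact $D$ using thick generation; but the long exact sequences involved shift homological degree, while the hypothesis $\sHom_G(M,\Hom_k(\kos k{\fp},T_C(I)))=0$ is a statement \emph{only} in degree zero (equivalently, $\Hom_{H^*(G,k)}(\sHom^*_G(C,\cdot),I)=0$ as a degree-zero group, not as a graded group). Without controlling the other graded pieces this propagation does not go through as written. Third, your reduction in condition~(2) to ``$\Hom_{H^*(G,k)}(K_i,I)=0$ for all $i$ implies $\Hom_{H^*(G,k)}(\prod_i K_i,I)=0$'' is indeed the right target, but the justification is off: the $\fp$-torsion functor $\varinjlim_n\Hom(H^*(G,k)/\fp^n,-)$ does \emph{not} commute with (even countable) products, so that route does not close the argument; BIK's actual mechanism uses finiteness/noetherianity of $H^*(G,k)$ more carefully. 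In short: part (1) matches the paper, part (2) is a plausible sketch of the cited BIK argument but with genuine gaps at the three points above.
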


\begin{proof} 
  Recall that $(-)^{\vee}$ denotes the functor $\Hom_{k}(-,k)$. For a
  $G$-module $M$, we consider $M^\vee$ with the diagonal $G$-action,
  and we have
\[\Hom_k(M,-)\cong -\otimes_k  M^\vee\] when $M$ is finite
dimensional. Combining this with standard
adjunctions and the definition of $T_C(I)$ gives the following
isomorphisms, which justify (1).
\begin{align*} 
\sHom_{G}(-,\Hom_k(M, T_C(I))) & \cong
\sHom_{G}(-\otimes_{k} M, T_C(I)) \\ & \cong
\Hom_{H^*(G,k)}(\sHom^*_G(C,-\otimes_{k} M ),I) \\ & \cong
 \Hom_{H^*(G,k)}(\sHom^{*}_{G}(C\otimes_{k}M^{\vee},
-),I) \\ & \cong \Hom_{H^*(G,k)}(\sHom^{*}_{G}(\Hom_{k}(M,C),-),I)\\
&\cong \sHom_{G}(-,T_{\Hom_k(M,C)}(I))
\end{align*}

As to (2), given the isomorphism in (1) applied to $M=\kos k{\fp}$, one can deduce the desired
result by mimicking the proof of
\cite[Proposition~5.4]{Benson/Iyengar/Krause:2012b}.
\end{proof}

For the next result we employ the reduction to closed points technique from \S\ref{sec:koszul}.

\begin{proposition}
\label{pr:res-kos}
Let $\fp$ be a point in $\Proj H^{*}(G,k)$ and $M$ a $\fp$-local
$G$-module. There exists a field extension $L/k$ and an ideal $\fq$ in
$H^{*}(G_{L},L)$ with radical a closed point in $\Proj H^{*}(G_{L},L)$
lying over $\fp$ such that the $G$-module
$\res^L_k \Hom_L(\kos {L}{\fq},M^L)$ is isomorphic to
$\Hom_k(\kos k{\fp},M)$.
\end{proposition}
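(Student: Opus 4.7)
The plan is to combine the two natural isomorphisms that the preceding sections have set up, and then argue that the resulting difference between the ``localised'' and ``unlocalised'' sides is invisible from $M$'s perspective because $M$ is $\fp$-local. First, Lemma~\ref{le:restriction}(2) applied with $K=L$ and the roles of $M,N$ as indicated gives a natural isomorphism of $G$-modules
\[
\res^L_k\Hom_L(\kos L{\fq},M^L)\cong\Hom_k(\res^L_k\kos L{\fq},M).
\]
Now choose $L/k$ and $\fq\subseteq H^*(G_L,L)$ via Theorem~\ref{thm:reduction}, so that $\res^L_k\kos L{\fq}\cong(\kos k{\fp})_\fp$ in $\StMod G$. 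This reduces the proposition to establishing an isomorphism
\[
\Hom_k((\kos k{\fp})_\fp,M)\xra{\ \sim\ }\Hom_k(\kos k{\fp},M)
\]
in $\StMod G$, induced by the localisation morphism $\kos k{\fp}\to(\kos k{\fp})_\fp$.

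For this, complete the localisation morphism to an exact triangle $L_0\to\kos k{\fp}\to(\kos k{\fp})_\fp\to$ in $\StMod G$. Applying $\Hom_k(-,M)$, it suffices to show that $\Hom_k(L_0,M)$ is projective as a $G$-module, i.e.\ vanishes in $\StMod G$. By adjunction, for any finite dimensional $G$-module $N$,
\[
\sHom^*_G(N,\Hom_k(L_0,M))\cong\sHom^*_G(N\otimes_k L_0,M),
\]
so it is enough to see that the right hand side is zero for every such $N$.

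The key point is that $L_0$ is $\fp$-acyclic for the Bousfield localisation $(-)_\fp$ on $\StMod G$ (i.e.\ $(L_0)_\fp=0$, by construction of the triangle). Because $(-)_\fp$ is the localisation associated to a multiplicative subset of the graded commutative ring $H^*(G,k)$ acting centrally on $\StMod G$, it is smashing; consequently $N\otimes_k L_0$ is also $\fp$-acyclic. Since $M$ is $\fp$-local, the defining property of the Bousfield localisation then forces $\sHom^*_G(N\otimes_k L_0,M)=0$. Alternatively, one may substitute a cosupport argument: by Remark~\ref{re:properties}(4), $\cosupp_G\Hom_k(L_0,M)=\supp_G(L_0)\cap\cosupp_G(M)$, and one checks that $\supp_G(L_0)\subseteq\mcV(\fp)\setminus\{\fq:\fq\subseteq\fp\}$ while $\cosupp_G(M)\subseteq\{\fq:\fq\subseteq\fp\}$ for $\fp$-local $M$, so this intersection is empty and Remark~\ref{re:properties}(1) finishes the proof. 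The main obstacle is the bookkeeping around the smashing/Bousfield step — verifying that $N\otimes_k L_0$ remains in the kernel of $(-)_\fp$ — but this is standard in the BIK framework used throughout the paper and is the only substantive input beyond Lemma~\ref{le:restriction} and Theorem~\ref{thm:reduction}.
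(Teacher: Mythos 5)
Your proposal is correct and takes essentially the same route as the paper: both use Theorem~\ref{thm:reduction} to choose $L$ and $\fq$, then Lemma~\ref{le:restriction} to identify $\res^L_k\Hom_L(\kos L{\fq},M^L)$ with $\Hom_k((\kos k{\fp})_\fp,M)$, and finally invoke $\fp$-locality of $M$ to drop the localisation on the source. The only difference is that the paper asserts the last isomorphism in one clause (``the last one holds as $M$ is $\fp$-local''), whereas you spell out the underlying smashing/cosupport justification — a correct elaboration, not a different argument.
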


\begin{proof} By Theorem~\ref{thm:reduction}, we can find  $L$ and $\fq$ such that  there is an isomorphism $\res^L_k({\kos L{\fq}})\cong (\kos k{\fp})_{\fp}$.  Thus there are isomorphisms
\begin{align*}
  \res^L_k \Hom_L(\kos {L}{\fq},M^L)&\cong  \Hom_k(\res^L_k(\kos {L}{\fq}),M)\\
  &\cong \Hom_k((\kos {k}{\fp})_\fp,M)\\
  &\cong \Hom_k(\kos {k}{\fp},M).
\end{align*}
The first one follows from Lemma~\ref{le:restriction} and the last one holds as $M$ is $\fp$-local.
\end{proof}

In what follows, $\Thick(M)$ denotes the thick subcategory of
$\StMod G$ generated by a $G$-module $M$.

\begin{theorem}
\label{th:pg} 
Given $\fp\in\Proj H^*(G,k)$, there exists a $\pi$-point $\alpha\colon K[t]/(t^p)\to KG$ corresponding to $\fp$ 
that factors through a quasi-elementary subgroup scheme of $G_{K}$ and has the following properties:
\begin{enumerate}[\quad\rm(1)] 
\item $\chr_{G}(\alpha)$ is a compact object in $(\StMod G)_{\fp}$.
\item $\Coloc(\chr_G(\alpha)) =\lam^\fp \StMod G$.
\end{enumerate}
\end{theorem}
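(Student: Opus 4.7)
The plan is to reduce to the closed-point case by extending scalars, and then transfer the resulting information through the restriction functor $\res^{L}_{k}$.

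By Theorem~\ref{thm:reduction}, choose a field extension $L/k$ and a homogeneous ideal $\fq \subset H^{*}(G_{L},L)$ with $\sqrt{\fq}$ a closed point of $\Proj H^{*}(G_{L},L)$ lying over $\fp$. Using Remark~\ref{re:choosing-pi}, select a $\pi$-point $\alpha\colon K[t]/(t^{p}) \to KG$ representing $\sqrt{\fq}$ with $K/L$ finite and $\alpha$ factoring through a quasi-elementary subgroup scheme of $G_{K}$. Viewed as a $\pi$-point of $G$ defined over $K\supset k$, this same $\alpha$ represents $\fp$, and $\chr_{G}(\alpha) = \res^{L}_{k}\chr_{G_{L}}(\alpha)$ where $\chr_{G_{L}}(\alpha)$ is finite dimensional over $L$ and hence compact in $\StMod G_{L}$. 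This is the $\alpha$ to be used.

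For condition (2), the inclusion $\Coloc(\chr_{G}(\alpha)) \subseteq \lam^{\fp}\StMod G$ is immediate from $\cosupp_{G}(\chr_{G}(\alpha)) = \{\fp\}$ (Corollary~\ref{co:point-dual}). For the reverse inclusion, invoke Lemma~\ref{le:TI}(2): with $I$ the injective envelope of $H^{*}(G,k)/\fp$, the modules $\Hom_{k}(\kos k \fp, T_{C}(I))$, as $C$ ranges over the simple $G$-modules, perfectly cogenerate $\lam^{\fp}\StMod G$, so it suffices to show each such module lies in $\Coloc(\chr_{G}(\alpha))$. Since $T_{C}(I)$ is $\fp$-local, Proposition~\ref{pr:res-kos} gives an isomorphism with $\res^{L}_{k}\Hom_{L}(\kos L \fq, T_{C}(I)^{L})$; because $\res^{L}_{k}$ preserves products and triangles it carries colocalising subcategories into colocalising subcategories, reducing the task to showing $\Hom_{L}(\kos L \fq, T_{C}(I)^{L}) \in \Coloc(\chr_{G_{L}}(\alpha))$ inside $\StMod G_{L}$. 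Lemma~\ref{le:pg}, applicable because $K/L$ is finite and $\sqrt{\fq}$ is closed, identifies $\Coloc(\chr_{G_{L}}(\alpha))$ with $\lam^{\sqrt{\fq}}\StMod G_{L}$, and the containment is then immediate from Remark~\ref{re:properties}(4) combined with $\supp_{G_{L}}(\kos L \fq) = \{\sqrt{\fq}\}$ (Remark~\ref{re:koszul}), which forces $\cosupp_{G_{L}}\Hom_{L}(\kos L \fq, N) \subseteq \{\sqrt{\fq}\}$ for every $N$.

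The principal obstacle is condition (1), the compactness of $\chr_{G}(\alpha)$ in $(\StMod G)_{\fp}$. The natural adjunction $\sHom_{G}(\chr_{G}(\alpha), M) \cong \sHom_{G_{K}}(\alpha_{*}(K), M^{K})$ reduces matters to the compactness of the finite dimensional $G_{K}$-module $\alpha_{*}(K)$, but stalls because the right adjoint $(-)^{K} = \Hom_{k}(K,-)$ need not preserve coproducts once $[K{:}k]$ is infinite. My plan is to circumvent this by combining part (2) with the equivalence $\gam_{\fp}\StMod G \simeq \lam^{\fp}\StMod G$: cogeneration of $\lam^{\fp}\StMod G$ by $\chr_{G}(\alpha)$, strengthened to a perfect cogenerator property by an argument parallel to Lemma~\ref{le:pg} that exploits compactness of $\chr_{G_{L}}(\alpha)$ over $L$ and the behaviour of products in $\lam^{\fp}\StMod G$, transports under the equivalence to a compact generation statement in $\gam_{\fp}\StMod G$. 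Since $\supp_{G}(\chr_{G}(\alpha)) = \{\fp\}$, every morphism out of $\chr_{G}(\alpha)$ into a $\fp$-local module $M$ factors through the colocalisation $\gam_{\fp}M \to M$; compactness in the localising subcategory $\gam_{\fp}\StMod G$ therefore propagates to compactness in the ambient $\fp$-local category. An alternative, more hands-on route would exploit the quasi-elementary factoring of $\alpha$ together with Tate duality~\eqref{eq:tate} over $K$, where finite dimensional modules are reflexive in the stable category, to verify the required coproduct preservation by direct computation on $\fp$-local inputs.
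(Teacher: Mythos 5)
Your setup (choosing $L$, $\fq$, the finite extension $K/L$, and $\alpha$ factoring through a quasi-elementary subgroup scheme by Remark~\ref{re:choosing-pi}) and your proof of part (2) match the paper's argument essentially step for step: both use Lemma~\ref{le:TI}(2), Proposition~\ref{pr:res-kos}, the fact that $\res^L_k$ preserves products and triangles, and Lemma~\ref{le:pg} applied over $G_L$.

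The problem is part (1). You correctly note that the naive adjunction argument stalls because $(-)^K=\Hom_k(K,-)$ fails to preserve coproducts when $[K:k]$ is infinite, but the mechanism you propose to get around this does not work. Perfect cogeneration is a condition involving \emph{products}, while compactness is a condition involving \emph{coproducts}. The equivalence $\gam_\fp\colon\lam^\fp\StMod G\to\gam_\fp\StMod G$ is a genuine categorical equivalence, so it sends products to products and coproducts to coproducts; it does not exchange them. Thus knowing that $\chr_G(\alpha)$ is a perfect cogenerator of $\lam^\fp\StMod G$ cannot, by itself and by transport of structure along this equivalence, yield compact generation of $\gam_\fp\StMod G$ --- the equivalence carries the perfect-cogenerator property of $\chr_G(\alpha)$ to the perfect-cogenerator property of $\gam_\fp\chr_G(\alpha)$ with respect to products in $\gam_\fp\StMod G$, which is a different (and weaker) kind of assertion than compactness. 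Your final ``propagation'' step (from compactness in $\gam_\fp\StMod G$ to compactness in $(\StMod G)_\fp$, via $\supp_G(\chr_G(\alpha))=\{\fp\}$ and the colocalisation $\gam_\fp M\to M$) is actually sound, but it is conditional on the compactness in $\gam_\fp\StMod G$ which you have not established. The Tate-duality alternative you mention is left as a one-sentence suggestion and, as stated, the duality \eqref{eq:tate} applies to finite-dimensional modules over $k$ in one variable, whereas $\chr_G(\alpha)$ is finite-dimensional only over $K$ and must sit in the \emph{contravariant} slot to address compactness, so it is far from clear that this route leads anywhere without substantial additional work.

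The paper proves (1) by a completely different, and more concrete, thick-subcategory argument: $\res^K_L\alpha_*(K)$ and $\kos L\fq$ are both finite-dimensional $G_L$-modules with support $\{\fm\}$, so the classification of tensor-closed thick subcategories of $\stmod G_L$ places $\res^K_L\alpha_*(K)$ inside $\Thick(\kos L\fq\otimes_L S_L)$, where $S$ is the sum of the simple $G$-modules; applying $\res^L_k$ and using Theorem~\ref{thm:reduction} identifies $\chr_G(\alpha)$ as lying in $\Thick\bigl((\kos k\fp\otimes_k S)_\fp\bigr)$, and the latter object is the $\fp$-localisation of a finite-dimensional $G$-module, hence manifestly compact in $(\StMod G)_\fp$. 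This is the ingredient you would need to supply.
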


\begin{proof} 
Let $L$ and $\fq$ be as in Proposition~\ref{pr:res-kos}, and let $\fm = \sqrt \fq$. 
Since $\fm$ is a closed point in $\Proj H^{*}(G_{L},L)$, 
there exists a finite extension $K$ of $L$ and a $\pi$-point
$\alpha\colon K[t]/(t^{p})\to KG$ of $G_{L}$ corresponding 
to $\fm$, and factoring through a quasi-elementary subgroup 
scheme of $G_{K}$; see Remark~\ref{re:choosing-pi}. It then 
follows directly from the definitions that  $\alpha$ corresponds 
to $\fp$, when viewed as a $\pi$-point of $G$.

(1) Set $M:=\kos L{\fq}$. This is a finite dimensional
$G_{L}$-module with support $\{\fm\}$; see Remark~\ref{re:koszul}. 
From the construction it is clear that the $G_{L}$-module $\res^{K}_{L}\alpha_{*}(K)$ 
is also finite dimensional and with support $\{\fm\}$. Thus the
classification~\cite[Corollary~10.2]{Benson/Iyengar/Krause/Pevtsova:2015b}
of tensor closed thick subcategories of $\stmod G_{L}$ yields that
$\res^{K}_{L}\alpha_{*}(K)$ is in $\Thick^{\otimes}(M)$. Any
simple $G_{L}$-module is a direct summand of $S_{L}$, where $S$ is the
sum of representatives of isomorphism classes of simple $G$-modules, so one gets
\[
\res^{K}_{L}\alpha_{*}(K) \in
\Thick(M\otimes_{L}S_{L}).
\]
Applying $\res^{L}_{k}$ and using Lemma~\ref{le:restriction}, one then gets that
\[
\chr_{G}(\alpha)= \res^{L}_{k}\res^{K}_{L}\alpha_{*}(K) \in \Thick((\res^{L}_{k}M)\otimes_{k}S).
\]
It remains to verify that $(\res^{L}_{k}M)\otimes_{k}S$ is a compact
object in $(\StMod G)_{\fp}$. To this end, note that there are
isomorphisms
\[
(\res^{L}_{k}M)\otimes_{k}S \cong (\kos k{\fp})_{\fp} \otimes_{k} S \cong (\kos k{\fp}\otimes_{k}S)_{\fp}
\]
where the first one is by Theorem~\ref{thm:reduction}
and the second is by, for example, \cite[Theorem~8.2]{Benson/Iyengar/Krause:2008a}. 
It remains to note that $\kos k{\fp}\otimes_{k}S$ is a finite dimensional $G$-module and 
hence compact in $\StMod G$, so that its localisation at $\fp$ is compact in $(\StMod G)_{\fp}$.

(2) Let $I$ denote the injective envelope of the $H^*(G,k)$-module $H^*(G,k)/\fp$.  
Since $\supp_{G_{L}}(\kos L{\fq})=\{\fm\}$,  Remark~\ref{re:properties}(4) implies that 
for any finite dimensional $G$-module $C$, the module $\Hom_{L}(\kos{L}{\fq}, T_C(I)^{L})$ 
belongs to $\lam^\fm \StMod G_{L}$. Given the choice of $\alpha$, Lemma~\ref{le:pg} 
thus implies that this module is cogenerated by $\chr_{G_{L}}(\alpha)$. So, by  
Proposition~\ref{pr:res-kos}, the $G$-module $\res^{L}_k\chr_{G_{L}}(\alpha)$, that is 
to say, $\chr_{G}(\alpha)$, cogenerates  $\Hom_k(\kos k{\fp}, T_C(I))$. It remains to 
apply Lemma~\ref{le:TI}(2).
\end{proof}

\subsection*{Minimality}
Next we prove that $\lam^\fp\StMod G$ is a minimal Hom closed colocalising subcategory. This requires further preparation.

\begin{lemma}
\label{le:ind-res}
Let $K$ be a field extension of $k$ and $H$  a subgroup scheme of $G_K$. Set $F=\res^K_k\coind^{G_K}_H(K)$. If $M$ is a $G$-module then
\[ 
\res^K_k\ind^{G_K}_H\res^{G_K}_H(M^K) = \Hom_{k}(F, M). 
\]
When $K$ is a finite extension of $k$, the $G$-module $F$ is finite dimensional over $k$.
\end{lemma}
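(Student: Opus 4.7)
The plan is to combine the two adjunction-compatibility results already established, namely Lemma~\ref{le:restriction} (for the base-change adjunction $\res^K_k \dashv (-)^K$) and Lemma~\ref{le:subgroup} (for the subgroup adjunction $\coind^{G_K}_H \dashv \res^{G_K}_H \dashv \ind^{G_K}_H$), and peel the composite functor $\res^K_k \ind^{G_K}_H \res^{G_K}_H(-)^K$ off one step at a time.

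First, I would apply the second specialised isomorphism of Lemma~\ref{le:subgroup} (the case $M=K$, applied internally over the base $K$ with $G$ replaced by $G_K$) to the $G_K$-module $M^K$. This rewrites the inner two functors as
\[
\ind^{G_K}_H \res^{G_K}_H (M^K) \;\cong\; \Hom_K\bigl(\coind^{G_K}_H(K),\, M^K\bigr).
\]
Then I would apply $\res^K_k$ to both sides and invoke the second isomorphism of Lemma~\ref{le:restriction}, with the role of its "$M$" played by $\coind^{G_K}_H(K)$ and the role of its "$N$" played by $M$. This yields
\[
\res^K_k \Hom_K\bigl(\coind^{G_K}_H(K),\, M^K\bigr) \;\cong\; \Hom_k\bigl(\res^K_k \coind^{G_K}_H(K),\, M\bigr) \;=\; \Hom_k(F,M),
\]
and composing these two natural isomorphisms delivers the displayed formula.

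For the finite-dimensionality statement, since $G$ is a finite group scheme, $KG_K \cong K\otimes_k kG$ is finite dimensional over $K$, and the description $\coind^{G_K}_H = KG_K \otimes_{KH}(-)$ recalled in the proof of Lemma~\ref{le:subgroup} shows that $\coind^{G_K}_H(K)$ is a quotient of $KG_K$, hence finite dimensional over $K$. When $[K:k]$ is finite, restricting scalars from $K$ to $k$ preserves finite dimensionality, so $F$ is finite dimensional over $k$. I do not expect a real obstacle: the only point that requires any care is matching the variables in Lemmas~\ref{le:restriction} and~\ref{le:subgroup} to the composite at hand, and tracking that the $K$-module structure on $\coind^{G_K}_H(K)$ used in the inner $\Hom_K$ is exactly the one that gets restricted to $k$ in the outer step.
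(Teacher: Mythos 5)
Your proposal is correct and follows essentially the same two-step route as the paper: apply Lemma~\ref{le:subgroup} (over the base field $K$, with $N=M^K$) to rewrite $\ind^{G_K}_H\res^{G_K}_H(M^K)$ as $\Hom_K(\coind^{G_K}_H(K),M^K)$, then apply the second isomorphism of Lemma~\ref{le:restriction} to pull $\res^K_k$ through the $\Hom$. The finite-dimensionality argument also matches in substance; the paper records the exact count $\dim_K\coind^{G_K}_H(K)=\dim_K(KG)/\dim_K(KH)$, whereas you observe it is a quotient of $KG_K$, but both give finiteness over $K$ and hence over $k$ when $[K:k]<\infty$.
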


\begin{proof} 
The desired result is a consequence of the following isomorphisms:
\begin{align*}
\res^K_k\ind^{G_K}_H\res^{G_K}_H(M^K) 
&\cong\res^K_k\Hom_K(\coind^{G_K}_H(K),M^K) \\
&\cong\Hom_k(\res^K_k\coind^{G_K}_H(K),M).
\end{align*}
The first one follows from Lemma~\ref{le:subgroup} and the second from Lemma~\ref{le:restriction}.  The last assertion follows from the fact that, in general, there are inequalities
\[
\dim_{K}\coind^{G_{K}}_{H}(K) = \frac{\dim_{K}(KG)}{\dim_{K}(KH)} \leq \dim_{K}(KG)
\]
and hence the number on the left is finite.
\end{proof}

\begin{lemma}
\label{le:qelem}
Let $\mcE$ be a quasi-elementary group scheme over $K$ and $\beta\colon K[t]/(t^p)\to K\mcE$ a $\pi$-point.
For any  $\mcE$-module $M$, the $\mcE$-module $\beta_!\beta^*(M)$ is in $\Thick(M)$.
\end{lemma}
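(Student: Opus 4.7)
By Theorem~\ref{th:gorenstein} the functors $\beta_!$ and $\beta_*$ are naturally isomorphic, so I reduce to showing that $N := K\mcE \otimes_R M$ belongs to $\Thick(M)$, where $R = K[t]/(t^{p})$. Let $\bar\beta := \beta(t) \in K\mcE$; since $\mcE$ is abelian, $\bar\beta$ is central with $\bar\beta^{p} = 0$, and the $\pi$-point condition makes $K\mcE$ free of finite rank over $R$ via $\beta$. The plan is to exhibit a finite filtration of $N$ by $\mcE$-submodules whose successive quotients lie in $\Thick(M)$.

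Explicitly, I would consider the filtration $N \supseteq \bar\beta N \supseteq \cdots \supseteq \bar\beta^{p-1} N \supseteq 0$. Exactness of $K\mcE \otimes_R -$ identifies $\bar\beta^{j} N = K\mcE \otimes_R \bar\beta^{j} M$ and the graded pieces as $K\mcE \otimes_R V_j$, where $V_j := \bar\beta^{j} M/\bar\beta^{j+1} M$ is a subquotient of $M$ on which $\bar\beta$ acts trivially. The crucial next step is to recognise each graded piece as the $\mcE$-module $(K\mcE/\bar\beta K\mcE) \otimes_K V_j$ with the diagonal action. When $\beta$ is a Hopf-algebra embedding---equivalently, when $\bar\beta$ generates a sub-Hopf-algebra corresponding to a one-dimensional subgroup scheme $H \subseteq \mcE$---this is a direct application of Lemma~\ref{le:subgroup}, which gives $K\mcE \otimes_R V_j = \coind^{\mcE}_H(\res^{\mcE}_H V_j) \cong \coind^{\mcE}_H(K) \otimes_K V_j$. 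For a general $\pi$-point I would argue via the natural map $a \otimes v \mapsto \sum \overline{a_{(1)}} \otimes a_{(2)} v$ (in Sweedler notation), using cocommutativity of $K\mcE$ to control the failure of $R$-balancedness modulo projective summands.

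Granted this diagonal identification, I invoke unipotence: since $\mcE$ is abelian and a product of $\mathbb{G}_{a(r)}$ with an elementary abelian $p$-group, the only simple $\mcE$-module is $K$, so $K\mcE/\bar\beta K\mcE$ admits an $\mcE$-module filtration with trivial quotients. Tensoring this filtration diagonally with $V_j$ yields a filtration of the graded piece of $N$ whose successive quotients are all isomorphic to $V_j$. Each $V_j$, being a subquotient of $M$ along the $\bar\beta$-filtration, lies in $\Thick(M)$ by an induction using the triangles arising from $0 \to \bar\beta^{j+1} M \to \bar\beta^{j} M \to V_j \to 0$ together with the cones of the central maps $\bar\beta^{j}\colon M \to M$ (each of which lies in $\Thick(M)$ by construction). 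Assembling through the outer filtration triangles then places $N$ in $\Thick(M)$.

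The principal obstacle is the diagonal-tensor identification of the graded pieces in the general (non-Hopf) case, since $\bar\beta$ need not be a primitive or group-like-minus-one element of $K\mcE$ and the natural candidate map need not be $R$-balanced on the nose. Negotiating this requires exploiting the cocommutativity and commutativity of $K\mcE$ together with the specific form of a quasi-elementary group scheme; once it is in place, the remainder of the argument is a routine use of unipotence and the filtration triangles.
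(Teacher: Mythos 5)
Your plan takes a genuinely different route from the paper: you filter $K\mcE\otimes_R M$ by powers of $\bar\beta$ and try to analyse the graded pieces. You have correctly located one danger spot --- when $\beta$ is not a Hopf algebra map there is no appeal to Lemma~\ref{le:subgroup}, and no obvious identification of $K\mcE\otimes_R V_j$ with $(K\mcE/\bar\beta K\mcE)\otimes_K V_j$ under the diagonal action. The paper dissolves precisely this difficulty with a short but decisive observation that you should internalise: neither $\beta^*$ nor $\beta_!$ uses the comultiplication of $K\mcE$, and $\StMod\mcE$ as a triangulated category (hence $\Thick(M)$) depends only on the underlying Frobenius \emph{algebra}, not on the Hopf structure. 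Since the algebra underlying a quasi-elementary group scheme is isomorphic to the group algebra of an elementary abelian $p$-group, one may simply replace the comultiplication by the one for that elementary abelian group, chosen so that $\beta$ becomes the inclusion of a cyclic subgroup $H$. There is then no non-Hopf case at all, and Lemma~\ref{le:ind-res} plus unipotence finishes the argument in two lines. Trying to ``control the failure of $R$-balancedness modulo projectives'' by hand, as you sketch, is exactly the work this trick lets you skip.

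There is, however, a second gap that you do not flag and that is fatal to the plan even in the Hopf case. You assert that $V_j:=\bar\beta^{j}M/\bar\beta^{j+1}M$ lies in $\Thick(M)$, but subquotients need not belong to the thick subcategory generated by a module. Take $\mcE=\bbZ/p\times\bbZ/p$, so $K\mcE= K[x,y]/(x^p,y^p)$, take $\bar\beta=x$, and take $M=K\mcE$; then $\Thick(M)=0$ in $\StMod\mcE$, while $V_0=K\mcE/xK\mcE\cong K[y]/(y^p)$ is a non-projective $K\mcE$-module and hence not in $\Thick(M)$. Your proposed justification via the short exact sequences and cones of $\bar\beta^{j}\colon M\to M$ cannot rescue this: the cone $C$ of $\bar\beta\colon M\to M$ sits in an exact sequence relating $M/\bar\beta M$ to $\Omega^{-1}\ker\bar\beta$, so one only learns that one of those two subquotients lies in $\Thick(M)$ if and only if the other does, not that both do. The paper's proof never needs to place any subquotient of $M$ in $\Thick(M)$: it writes $\beta_!\beta^*(M)\cong\Hom_k(F,M)$ with $F=\coind^{\mcE}_H K$ a fixed finite-dimensional $\mcE$-module, then filters $F$ (not $M$) by trivial composition factors and applies the exact functor $\Hom_k(-,M)$, obtaining a filtration of $\Hom_k(F,M)$ whose layers are genuine copies of $\Hom_k(k,M)\cong M$. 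To repair your approach you would need to reorganise the filtration so that it runs over a finite filtration of a fixed finite-dimensional object, rather than over the $\bar\beta$-filtration of $M$ itself.
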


\begin{proof} 
Note that neither $\beta^{*}$ nor $\beta_{!}$ involve the coproduct on $\mcE$, so we may change that and assume that $KU$ is the group algebra of an elementary abelian $p$-group and that $\beta$ is the inclusion $KH\to K\mcE$, where $H$ is a cyclic subgroup $\mcE$.  
Lemma~\ref{le:ind-res} then yields that $\ind^{\mcE}_{H}\res^{\mcE}_{H}(M)$, that is to say, $\beta_!\beta^*(M)$, equals $\Hom_{k}(F,M)$ for some finite dimensional $\mcE$-module $F$. Since $k$ is the only simple $\mcE$-module, $F$ is in  $\Thick(k)$, and hence $\Hom_{k}(F,M)$ is in $\Thick(\Hom_{k}(k,M))$. It remains to recall that $\Hom_{k}(k,M)\cong M$ as $\mcE$-modules.
\end{proof}

Combining the preceding results one obtains the following.

\begin{proposition}
\label{pr:cg}
Let $\alpha\colon K[t]/(t^p)\to KG$ be a $\pi$-point of $G$ that factors through a quasi-elementary subgroup scheme of $G_{K}$. Then $\res^{K}_{k}\alpha_!\alpha^*(M^K)$ is in $\Coloc^{\Hom}(M)$ for any $G$-module $M$.
\end{proposition}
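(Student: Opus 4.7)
The plan is to factor $\alpha$ through the quasi-elementary subgroup and then chain together Lemmas~\ref{le:qelem} and~\ref{le:ind-res}. By hypothesis, $\alpha$ decomposes as
$$K[t]/(t^{p}) \xrightarrow{\ \beta\ } K\mcE \hookrightarrow KG$$
for some quasi-elementary subgroup scheme $\mcE$ of $G_{K}$. Correspondingly, $\alpha^{*}=\beta^{*}\res^{G_{K}}_{\mcE}$ and $\alpha_{!}=\ind^{G_{K}}_{\mcE}\beta_{!}$, so setting $N := \res^{G_{K}}_{\mcE}(M^{K})$ gives the identity
$$\res^{K}_{k}\alpha_{!}\alpha^{*}(M^{K}) \;=\; \res^{K}_{k}\ind^{G_{K}}_{\mcE}\bigl(\beta_{!}\beta^{*}N\bigr).$$

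First, I would apply Lemma~\ref{le:qelem} to $N$ to obtain $\beta_{!}\beta^{*}(N) \in \Thick(N)$ in $\StMod\mcE$. Both $\ind^{G_{K}}_{\mcE}$ and $\res^{K}_{k}$ descend to triangulated functors on the stable module categories---they are exact and preserve projectives, the former because $kG_{K}$ is $k\mcE$-free by the standard freeness theorem for finite Hopf algebra extensions---so applying them in succession transports the thick-closure containment, yielding
$$\res^{K}_{k}\alpha_{!}\alpha^{*}(M^{K}) \in \Thick\bigl(\res^{K}_{k}\ind^{G_{K}}_{\mcE}\res^{G_{K}}_{\mcE}(M^{K})\bigr).$$

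Next, Lemma~\ref{le:ind-res} applied with $H=\mcE$ identifies the generator on the right as $\Hom_{k}(F,M)$, where $F = \res^{K}_{k}\coind^{G_{K}}_{\mcE}(K)$. Since $M \in \Coloc^{\Hom}(M)$ and this subcategory is by definition Hom closed, we conclude $\Hom_{k}(F,M) \in \Coloc^{\Hom}(M)$. Finally, any colocalising subcategory in $\StMod G$ is automatically thick, because closure under countable products forces idempotents to split (Bökstedt--Neeman); hence $\Coloc^{\Hom}(M)$ absorbs $\Thick(\Hom_{k}(F,M))$, and in particular contains $\res^{K}_{k}\alpha_{!}\alpha^{*}(M^{K})$.

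There is no real obstacle: once the factorisation through $\mcE$ is in place, the argument is essentially bookkeeping. The one point that warrants care is verifying that $\ind^{G_{K}}_{\mcE}$ and $\res^{K}_{k}$ are triangulated on the stable categories, so that thick containments are preserved under their application; the Hom-closure condition then does exactly the work required to pull $\Hom_{k}(F,-)$ inside the colocalising subcategory generated by $M$.
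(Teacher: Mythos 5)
Your proof is correct and follows essentially the same line as the paper's: factor $\alpha$ through the quasi-elementary subgroup, invoke Lemma~\ref{le:qelem} to obtain a $\Thick$ containment, push it forward through $\ind^{G_K}_{\mcE}$ and $\res^K_k$, and then apply Lemma~\ref{le:ind-res} together with the Hom closure of $\Coloc^{\Hom}(M)$. The points you flag explicitly---that these functors are triangulated on the stable categories and that colocalising subcategories are thick---are exactly the implicit ingredients the paper's terser argument relies on.
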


\begin{proof}
By hypothesis, there exists a quasi-elementary subgroup scheme $U$ of $G_{K}$ such that $\alpha=\gamma\comp\beta$ where  $\beta\colon K[t]/(t^p)\to KU$ and $\gamma\colon KU\to KG$. Then
\[ 
\res^{K}_{k}\alpha_!\alpha^*(M^K)= \res^{K}_{k}\gamma_!\beta_!\beta^*\gamma^*(M^K). 
\]
Since $\beta_!\beta^*\gamma^*(M^K)$ is in $\Thick(\gamma^*(M^K))$ by Lemma~\ref{le:qelem}, one has that
\[ 
\res^{K}_{k}\alpha_!\alpha^*(M^K)\in \Thick(\res^{K}_{k}\gamma_!\gamma^*(M^K)). 
\] 
Since $\res^K_k\gamma_!\gamma^*(M^K)$ is in $\Coloc^{\Hom}(M)$ by Lemma~\ref{le:ind-res}, it follows that
\[
\res^{K}_{k}\alpha_!\alpha^*(M^K)\in\Coloc^{\Hom}(M). \qedhere
\]
\end{proof}

The next result complements Theorem~\ref{th:pg}.

\begin{theorem}
\label{th:cg}
Let $M$ be a $G$-module and $\fp\in\cosupp_{G}(M)$. If $\alpha\colon K[t]/(t^p)\to KG$ is a $\pi$-point that factors through a quasi-elementary subgroup scheme of $G_{K}$ and represents $\fp$, then  $\chr_G(\alpha)$ is in $\Coloc^{\Hom}(M)$.
\end{theorem}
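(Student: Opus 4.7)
The plan is to funnel everything through Proposition~\ref{pr:cg}, which delivers $\res^{K}_{k}\alpha_!\alpha^*(M^K) \in \Coloc^{\Hom}(M)$ as soon as the quasi-elementary factorisation hypothesis on $\alpha$ is in hand. Since $\Coloc^{\Hom}(M)$ is a triangulated subcategory closed under direct summands, it would suffice to prove that $\chr_G(\alpha) = \res^{K}_{k}\alpha_!(K)$ lies in the thick subcategory of $\StMod G$ generated by $\res^{K}_{k}\alpha_!\alpha^*(M^K)$. Both $\res^{K}_{k}$ and $\alpha_!$ descend to triangulated functors on the stable module categories: the functor $\alpha_! = \Hom_{K[t]/(t^p)}(KG,-)$ is exact because $KG$ is finitely generated projective over $K[t]/(t^p)$ by the $\pi$-point hypothesis, and self-injectivity of both algebras ensures that it preserves projective modules. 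Triangulated functors send thick closures into thick closures, so the problem reduces to the assertion that $K \in \Thick(\alpha^*(M^K))$ in $\StMod K[t]/(t^p)$.

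This last assertion uses only the hypothesis that $\fp \in \cosupp_G(M) = \picosupp_G(M)$, which by definition of $\pi$-cosupport means that $\alpha^*(M^K)$ is non-projective as a $K[t]/(t^p)$-module. The standard structure theorem for modules over the artinian principal ideal ring $K[t]/(t^p)$ writes $\alpha^*(M^K)$ as a direct sum of indecomposables $K[t]/(t^j)$ with $1 \leq j \leq p$, and non-projectivity forces some $K[t]/(t^i)$ with $i < p$ to appear as a direct summand, whence $K[t]/(t^i) \in \Thick(\alpha^*(M^K))$. It then suffices to show $K \in \Thick(K[t]/(t^i))$ in $\stmod K[t]/(t^p)$ for $1 \leq i < p$. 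The short exact sequences
\[
0 \to K[t]/(t^a) \to K[t]/(t^{a+b}) \to K[t]/(t^b) \to 0 \qquad (a+b \leq p)
\]
produce triangles in the stable category which, together with the $\Omega$-action $K[t]/(t^j) \mapsto K[t]/(t^{p-j})$, allow one to build any $K[t]/(t^j)$, and in particular $K$, starting from $K[t]/(t^i)$. Alternatively, this is an instance of the classification of thick subcategories of $\stmod K[t]/(t^p)$: since the projective support variety is a single point, only the trivial thick subcategories occur.

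Reassembling: the containment $K \in \Thick(\alpha^*(M^K))$ in $\StMod K[t]/(t^p)$ propagates under $\alpha_!$ to $\alpha_!(K) \in \Thick(\alpha_!\alpha^*(M^K))$ in $\StMod KG$, and then under $\res^{K}_{k}$ to $\chr_G(\alpha) \in \Thick(\res^{K}_{k}\alpha_!\alpha^*(M^K))$ in $\StMod G$, which by Proposition~\ref{pr:cg} lies in $\Coloc^{\Hom}(M)$. The technical heart of the argument is the thick-generation claim in $\stmod K[t]/(t^p)$; once that is settled, everything else is formal from functoriality of thick closures and the quasi-elementary reduction already packaged in Proposition~\ref{pr:cg}.
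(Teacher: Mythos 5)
Your proof is correct and follows the same strategy as the paper's: reduce to showing that $K$ lies in the subcategory of $\StMod K[t]/(t^p)$ generated by $\alpha^*(M^K)$, push this forward along $\res^K_k\comp\alpha_!$, and apply Proposition~\ref{pr:cg}. The paper phrases the intermediate steps with $\Coloc$ rather than $\Thick$ (both work, since $\alpha_!$ and $\res^K_k$ preserve products as well as triangles, and $\Coloc^{\Hom}(M)$ is in particular thick) and simply asserts $K\in\Coloc(\alpha^*(M^K))$; your K\"othe-style decomposition of $\alpha^*(M^K)$ into cyclic $K[t]/(t^p)$-modules makes that assertion explicit.
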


\begin{proof} 
By hypothesis on $\fp$, the $k[t]/(t^{p})$-module $\alpha^*(M^K)$ is not projective, and hence $K$ is in $\Coloc(\alpha^*(M^K))$.  This implies that $\alpha_!(K)$ is in $\Coloc(\alpha_!\alpha^*(M^K))$, and hence, by restriction of scalars, that 
\[ 
\chr_G(\alpha) \text{ is in } \Coloc(\res^{K}_{k}\alpha_!\alpha^*(M^K)). 
\]
Finally, by Proposition~\ref{pr:cg}, the module on the right is in $\Coloc^{\Hom}(M)$.
\end{proof}

\begin{corollary}
\label{co:costratified}
For $\fp\in\Proj H^*(G,k)$, the colocalising subcategory $\lam^\fp\StMod G$ of $\StMod G$ contains no proper non-zero Hom closed colocalising subcategories.
\end{corollary}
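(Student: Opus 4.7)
The plan is to combine Theorem~\ref{th:pg} and Theorem~\ref{th:cg} directly. Let $\sfC \subseteq \lam^\fp\StMod G$ be a non-zero Hom closed colocalising subcategory, and pick a non-zero object $M \in \sfC$. The goal is to show $\sfC = \lam^\fp\StMod G$, which I would deduce by exhibiting a single object in $\sfC$ that already cogenerates all of $\lam^\fp\StMod G$.

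First I would pin down $\cosupp_G(M)$. Since $M$ lies in $\lam^\fp\StMod G$, the description recalled at the start of Section~4 gives $\cosupp_G(M) \subseteq \{\fp\}$. Since $M$ is non-zero, it is not projective, so by Remark~\ref{re:properties}(1) its cosupport is non-empty. Hence $\cosupp_G(M) = \{\fp\}$ and in particular $\fp \in \cosupp_G(M)$.

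Next I would invoke Theorem~\ref{th:pg} to select a $\pi$-point $\alpha\colon K[t]/(t^p)\to KG$ representing $\fp$ that factors through a quasi-elementary subgroup scheme of $G_K$ and satisfies $\Coloc(\chr_G(\alpha)) = \lam^\fp\StMod G$. With this specific $\alpha$ in hand, Theorem~\ref{th:cg} applies to $M$ and gives $\chr_G(\alpha) \in \Coloc^{\Hom}(M)$. Since $\sfC$ is Hom closed colocalising and contains $M$, we have $\Coloc^{\Hom}(M) \subseteq \sfC$, so $\chr_G(\alpha) \in \sfC$. Taking colocalising subcategories and using Theorem~\ref{th:pg}(2) then yields
\[
\lam^\fp\StMod G = \Coloc(\chr_G(\alpha)) \subseteq \sfC \subseteq \lam^\fp\StMod G,
\]
which forces equality and completes the proof.

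The argument as sketched is essentially a bookkeeping exercise once the two preceding theorems are in place; there is no genuine obstacle left at this stage. The real content has been absorbed into Theorem~\ref{th:pg} (that a carefully chosen point module cogenerates the whole of $\lam^\fp\StMod G$) and Theorem~\ref{th:cg} (that the same point module can be recovered from any object with $\fp$ in its cosupport, using only Hom closed colocalising operations). The one subtlety worth flagging is that the $\pi$-point $\alpha$ must be chosen \emph{simultaneously} to satisfy the cogeneration property of Theorem~\ref{th:pg} and to factor through a quasi-elementary subgroup scheme so that Theorem~\ref{th:cg} applies; fortunately Theorem~\ref{th:pg} was proved with exactly this compatibility built in, so the two results dovetail without further work.
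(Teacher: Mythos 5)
Your argument is correct and is essentially the paper's proof: both fix the same $\pi$-point $\alpha$ from Theorem~\ref{th:pg}, use Theorem~\ref{th:cg} to place $\chr_G(\alpha)$ inside $\Coloc^{\Hom}(M)$ for any non-zero $M\in\lam^\fp\StMod G$, and conclude via $\Coloc(\chr_G(\alpha))=\lam^\fp\StMod G$. The only variation is that you phrase it as showing an arbitrary non-zero Hom closed colocalising subcategory $\sfC$ equals $\lam^\fp\StMod G$, whereas the paper directly states $\lam^\fp\StMod G\subseteq\Coloc^{\Hom}(M)$; these are the same observation.
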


\begin{proof}
Fix a $\pi$-point $\alpha$ as in Theorem~\ref{th:pg}, factoring through a quasi-elementary subgroup scheme.  Since $\fp$ is in the $\pi$-cosupport of any non-zero module $M$ in $\lam^\fp\StMod G$, Theorem~\ref{th:cg} yields the inclusion below
\[
\lam^\fp\StMod G = \Coloc(\chr_G(\alpha)) \subseteq \Coloc^{\Hom}(M)\,.
\]
The equality is from Theorem \ref{th:pg}. This is the desired result.
\end{proof}

\begin{proof}[Proof of Theorem~\ref{th:costratification}]
  In the terminology of \cite{Benson/Iyengar/Krause:2012b},
  Corollary~\ref{co:costratified} means that $\StMod G$ is
  \emph{costratified} by the action of $H^*(G,k)$. Given this
  \cite[Corollary~9.2]{Benson/Iyengar/Krause:2012b} yields the desired
  bijection between Hom closed colocalising subcategories of
  $\StMod G$ and subsets of $\Proj H^*(G,k)$.
\end{proof}

\subsection*{Colocalising and localising subcategories}
A key step in the proof of the classification theorem above is that, given a point $\fp$ in $\Proj H^{*}(G,k)$, the point module associated to a certain type of $\pi$-point representing $\fp$ cogenerates $\lam^{\fp}\StMod G$; see~Theorem~\ref{th:pg}. As a corollary of the classification result, it follows that any $\pi$-point may be used, as long as we allow also tensor products with simple modules.

\begin{corollary}
\label{co:point-modules}
For any point $\fp$ in $\Proj H^{*}(G,k)$ and any $\pi$-point representing $\fp$, there is an equality
\[
\Loc^{\otimes}(\chr_{G}(\alpha)) = \gam_{\fp}\StMod G\quad\text{and}\quad
	\Coloc^{\Hom}(\chr_{G}(\alpha)) = \lam^{\fp}\StMod G\,.
\]
\end{corollary}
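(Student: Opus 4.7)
The plan is to deduce both equalities directly from the classification theorems, reducing each to a comparison of $\pi$-(co)support subsets. For the colocalising equality, Theorem~\ref{th:costratification} sets up a bijection between Hom closed colocalising subcategories of $\StMod G$ and subsets of $\Proj H^*(G,k)$ via $\sfC \mapsto \bigcup_{M \in \sfC} \picosupp_G(M)$, so it suffices to show that both sides of the equality correspond under this bijection to the singleton $\{\fp\}$.

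The key input is Corollary~\ref{co:point-dual}, which combined with Theorem~\ref{th:bik=fp} yields
\[
\picosupp_G(\chr_G(\alpha)) = \{\fp\} = \pisupp_G(\chr_G(\alpha))\,.
\]
From the cosupport characterisation $\lam^\fp \StMod G = \{M : \cosupp_G(M) \subseteq \{\fp\}\}$ it follows that $\chr_G(\alpha)$ lies in $\lam^\fp \StMod G$. This subcategory is Hom closed: Remark~\ref{re:properties}(4) gives $\cosupp_G \Hom_k(L,M) \subseteq \cosupp_G(M)$ for any $G$-module $L$. Hence $\Coloc^{\Hom}(\chr_G(\alpha)) \subseteq \lam^\fp \StMod G$, and under the bijection the right hand side maps to exactly $\{\fp\}$, while the left hand side maps to a subset of $\{\fp\}$ that already contains $\picosupp_G(\chr_G(\alpha)) = \{\fp\}$. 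Both therefore correspond to the singleton $\{\fp\}$, and the bijection forces equality of the subcategories.

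The localising equality is argued in the same way, using the classification of tensor closed localising subcategories in terms of $\pi$-support from \cite{Benson/Iyengar/Krause/Pevtsova:2015b}, together with $\gam_\fp \StMod G = \{M : \supp_G(M) \subseteq \{\fp\}\}$ and the tensor closedness $\supp_G(L \otimes_k M) \subseteq \supp_G(M)$ extracted from Remark~\ref{re:properties}(3).

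There is no genuine obstacle to surmount: the heavy lifting has already been done in Theorem~\ref{th:pg}, which produces one specific $\pi$-point whose point module cogenerates $\lam^\fp \StMod G$, and in Theorem~\ref{th:costratification}. The role of the classification here is precisely to upgrade that special case to an arbitrary $\pi$-point representing $\fp$, with closure under tensors with simple modules (equivalently, under Hom) absorbing any discrepancy between different representatives of the same $\pi$-equivalence class.
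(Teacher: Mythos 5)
Your proof is correct and follows essentially the same route as the paper's: both equalities are read off from the classification/bijection theorems (Theorem~\ref{th:costratification} for the Hom closed colocalising side and \cite[Theorem~8.1]{Benson/Iyengar/Krause/Pevtsova:2015b} for the tensor closed localising side) once Corollary~\ref{co:point-dual} pins down $\supp_G(\chr_G(\alpha))=\{\fp\}=\cosupp_G(\chr_G(\alpha))$. The paper's published proof is just a terser version of exactly the argument you spell out.
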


\begin{proof}
Since $\supp_{G}(\chr_{G}(\alpha))=\{\fp\}$, the first equality is a direct consequence of the bijection between tensor closed localising subcategories of $\StMod G$ and subsets of $\Proj H^{*}(G,k)$ established in \cite[Theorem~8.1]{Benson/Iyengar/Krause/Pevtsova:2015b}. In the same vein, the second equality follows from Theorem~\ref{th:costratification}, since $\cosupp_{G}(\chr_{G}(\alpha))=\{\fp\}$.
\end{proof}

Given a subcategory $\sfC$ of $\StMod G$ we set
\[
\supp_{G}(\sfC):=\bigcup_{M\in \sfC}\supp_{G}(M)\quad\text{and}\quad
\cosupp_{G}(\sfC):=\bigcup_{M\in \sfC}\cosupp_{G}(M)\,.
\]
For any subset $\mcU\subseteq \Proj H^*(G,k)$ set
\[
\cl(\mcU):=\{\fp\in \Proj H^*(G,k)\mid \fp\subseteq\fq\text{ for  some }\fq\in\mcU\}.
\] 
This is the closure of $\mcU$ with respect to the Hochster dual of the Zariski topology \cite{Hochster:1969a}, and we call $\mcU$
\emph{generalisation closed} if $\cl(\mcU)=\mcU$. 
\begin{corollary}
\label{co:localising-colocalising}
For a subcategory $\sfC\subseteq\StMod G$ the following are
equivalent:
\begin{enumerate}[\quad\rm(1)]
\item $\sfC$ is a tensor closed localising subcategory and closed under all products;
\item $\sfC$ is a Hom closed colocalising subcategory and closed under all coproducts.
\end{enumerate}
In that case we have $\supp_G(\sfC)=\cosupp_G(\sfC)$ and this set is
generalisation closed. Moreover, any generalisation closed subset of
$\Proj H^*(G,k)$ arises in that way.
\end{corollary}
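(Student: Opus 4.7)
The plan is to combine Theorem~\ref{th:costratification} with \cite[Theorem~8.1]{Benson/Iyengar/Krause/Pevtsova:2015b} (which classifies tensor closed localising subcategories of $\StMod G$ via $\supp_{G}$), together with the coincidence of maximal elements of $\supp_{G}$ and $\cosupp_{G}$ from Remark~\ref{re:properties}(2).

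First I would establish (1) $\Leftrightarrow$ (2) via a $\Hom$-$\otimes$ duality argument. For (1) $\Rightarrow$ (2) it suffices to verify Hom closedness. The functor $\Hom_{k}(-,M)\colon\StMod G\to\StMod G$ is triangulated, because $\Hom_{k}(-,-)$ is exact over the field $k$, and it converts coproducts to products. For any finite dimensional $N'$, one has $\Hom_{k}(N',M)\cong (N')^{\vee}\otimes_{k}M$, which lies in $\sfC$ by tensor closedness. Since $\StMod G$ is compactly generated by $\stmod G$, every $G$-module $N$ lies in $\Loc(\stmod G)$, so $\Hom_{k}(N,M)$ belongs to the colocalising closure of $\{\Hom_{k}(N',M):N'\in\stmod G\}$, which sits inside $\sfC$ because $\sfC$ is triangulated and closed under products. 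The reverse implication (2) $\Rightarrow$ (1) is entirely symmetric, using $N\otimes_{k}M\cong\Hom_{k}(N^{\vee},M)$ for finite dimensional $N$ and closure under coproducts.

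Now suppose $\sfC$ satisfies both (1) and (2). The equality $\supp_{G}(\sfC)=\cosupp_{G}(\sfC)$ follows by a point-module argument: for $\fp\in\supp_{G}(\sfC)$, minimality of $\gam_{\fp}\StMod G$ (Corollary~\ref{co:point-modules}) forces $\chr_{G}(\alpha)\in\sfC$ for any $\pi$-point $\alpha$ at $\fp$, and $\cosupp_{G}(\chr_{G}(\alpha))=\{\fp\}$ from Corollary~\ref{co:point-dual} then gives $\fp\in\cosupp_{G}(\sfC)$. The reverse inclusion is the symmetric application of Corollary~\ref{co:costratified}. Call this common set $\mcU$; by the two classifications, $\sfC=\{M\mid\supp_{G}(M)\subseteq\mcU\}=\{M\mid\cosupp_{G}(M)\subseteq\mcU\}$.

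For generalisation closedness of $\mcU$, the key auxiliary observation is that for any $G$-module $M$ and any subset $\mcW\subseteq\Proj H^{*}(G,k)$, the inclusion $\supp_{G}(M)\subseteq\mcW$ forces $\cosupp_{G}(M)\subseteq\cl(\mcW)$, and vice versa: by noetherianity of $H^{*}(G,k)$, any $\fp\in\cosupp_{G}(M)$ sits under a maximal element $\fq$ of $\cosupp_{G}(M)$, which by Remark~\ref{re:properties}(2) is also maximal in $\supp_{G}(M)\subseteq\mcW$, and hence $\fp\in\cl(\mcW)$. To deduce gen closedness of $\mcU$, given $\fp\subseteq\fq\in\mcU$ one invokes the stratification machinery of \cite{Benson/Iyengar/Krause/Pevtsova:2015b} to produce an object $M\in\gam_{\fq}\StMod G$ with $\fp\in\cosupp_{G}(M)$; since $\supp_{G}(M)\subseteq\{\fq\}\subseteq\mcU$ places $M\in\sfC$, the second description of $\sfC$ forces $\fp\in\mcU$. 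Conversely, for any generalisation closed $\mcU$, two applications of the auxiliary observation deliver the equality $\{M\mid\supp_{G}(M)\subseteq\mcU\}=\{M\mid\cosupp_{G}(M)\subseteq\mcU\}$, and this subcategory is simultaneously tensor closed localising, hence closed under coproducts, and Hom closed colocalising, hence closed under products, thereby satisfying both (1) and (2). The main technical hurdle is the production of an object in $\gam_{\fq}\StMod G$ with a prescribed smaller prime in its cosupport; the remainder is a formal consequence of the two classifications combined with the coincidence of maximal elements.
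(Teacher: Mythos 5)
Your equivalence (1) $\Leftrightarrow$ (2) is correct and in fact more explicit than the paper's, which packages it into the bijection of Corollary~\ref{co:locandcoloc}. The argument works by considering, for a fixed $M\in\sfC$, the subcategory $\{L:\Hom_k(L,M)\in\sfC\}$: it is triangulated, contains $\stmod G$ via $\Hom_k(N',M)\cong(N')^\vee\otimes_k M$, and is closed under coproducts since $\Hom_k(-,M)$ sends coproducts to products and $\sfC$ is closed under products. The equality $\supp_G(\sfC)=\cosupp_G(\sfC)$ via point modules is also correct, though the appeal should be to the minimality/stratification result of \cite{Benson/Iyengar/Krause/Pevtsova:2015b} rather than to Corollary~\ref{co:point-modules} per se (which is a consequence, not a reformulation, of minimality). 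Your "auxiliary observation" relating $\supp_G(M)\subseteq\mcW$ to $\cosupp_G(M)\subseteq\cl(\mcW)$ via Remark~\ref{re:properties}(2) and noetherianity is sound, and your converse argument that every generalisation closed $\mcU$ is realised is correct.

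The gap is where you flag it: the production, for $\fp\subseteq\fq\in\mcU$ with $\fq$ fixed, of an object $M$ with $\supp_G(M)\subseteq\{\fq\}$ and $\fp\in\cosupp_G(M)$. This is a genuine additional claim, essentially amounting to $\cosupp_G(\gam_\fq k)$ containing the whole generalisation closure of $\fq$, and it does not follow from anything stated in the paper or from Remark~\ref{re:properties}; the maximal-element coincidence (2) only gives the containment $\cosupp_G(\gam_\fq k)\subseteq\cl(\{\fq\})$, not the reverse. (To see that it is not vacuous: in $D(R)$ for $R$ a DVR with maximal ideal $\fq$, one needs $\RHom_R(\mathrm{Frac}(R),E(R/\fq))\neq 0$, which is true but requires an independent computation.) The paper sidesteps the issue entirely by citing \cite[Theorems~6.3 and 11.8]{Benson/Iyengar/Krause:2011a} for the bijection between tensor-closed localising subcategories closed under products and generalisation closed subsets — proved there via the smashing/telescope machinery available under stratification — and then deduces the colocalising side from Corollary~\ref{co:locandcoloc} together with the complementarity identity $\supp_G(\sfC)\sqcup\cosupp_G(\sfC^\perp)=\Proj H^*(G,k)$. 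Either supply a proof of the witness claim (for instance via the adjunction $\lam^\fp\gam_\fq\simeq\lam^\fp$ on $\fq$-torsion objects and a nonvanishing argument in $\lam^\fp\StMod G$) or replace this step by the telescope argument the paper invokes.
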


\begin{proof}
  In \cite{Benson/Iyengar/Krause/Pevtsova:2015b} it is proved that, as
  a tensor triangulated category, $\StMod G$ is stratified by
  $H^*(G,k)$.  It follows that the assignment $\sfC\mapsto\supp_G(\sfC)$
  yields a bijection between the tensor closed localising subcategories
  of $\StMod G$ that are closed under all products and the
  generalisation closed subsets of $\Proj H^*(G,k)$. This can be
  verified by mimicking the argument used to prove the implication
  (a) $\Leftrightarrow$ (c) of \cite[Theorem~11.8]{Benson/Iyengar/Krause:2011a};
  see also \cite[Theorem~6.3]{Benson/Iyengar/Krause:2011a}. The
  desired assertion now follows from the bijection between localising
  and colocalising subcategories (Corollary~\ref{co:locandcoloc}),
  noticing that for any tensor ideal localising subcategory $\sfC$ we
  have
\[
\supp_G(\sfC)\sqcup\cosupp_G(\sfC^\perp)=\Proj H^*(G,k).\qedhere
\]
\end{proof}

For any generalisation closed subset $\mcU\subseteq\Proj H^*(G,k)$ we set
\begin{align*}
  (\StMod G)_\mcU :=\{M\in\StMod G\mid
  \supp_G(M)\subseteq\mcU\}\,.
\end{align*}
We collect some basic properties of this category.

\begin{remark}
There is an equality 
\[
(\StMod G)_\mcU=\{M\in\StMod G\mid \cosupp_G(M)\subseteq\mcU\}
\] 
and this is compactly generated as a triangulated category. The first assertion is justified by
Remark~\ref{re:properties}(2), and  compact generation follows from
the fact that 
\[(\StMod G)_\mcU=\gam_{\mcU^{\sfc}}(\StMod_{G})^\perp\]
where $\mcU^{\sfc}:=\Proj H^{*}(G,k)\setminus \mcU$.  Indeed, the
subset $\mcU^{\sfc}$ is specialisation closed, so
$\gam_{\mcU^{\sfc}}(\StMod_{G})$ is compactly generated (see, for
example, \cite[Proposition~2.7]{Benson/Iyengar/Krause:2011a}). Now the
assertion is a formal consequence of \cite[Theorem~2.1]{Neeman:1992a} and
\cite[Theorem~9.1.16]{Neeman:2001a}.

Given generalisation closed subsets
$\mcV\subseteq \mcU\subseteq\Proj H^*(G,k)$, it follows from Brown
representability \cite{Neeman:2001a} that the inclusion
\[
(\StMod G)_\mcV\lra (\StMod G)_\mcU
\] 
admits a left adjoint and a right adjoint, because the functor
preserves products and coproducts.
\end{remark}

Now fix a point $\fp$ in $\Proj H^*(G,k)$ and consider the 
generalisation closure of $\fp$. Then $(\StMod G)_{\le\fp}$ equals the
full subcategory of $\fp$-local $G$-modules and we obtain the following
pair of equivalent recollements.
\[\begin{tikzcd}
  (\StMod G)_{<\fp}\arrow[tail]{rr} &&(\StMod G)_{\le\fp}
  \arrow[twoheadrightarrow,yshift=-1.5ex]{ll}
  \arrow[twoheadrightarrow,yshift=1.5ex]{ll}
  \arrow[twoheadrightarrow]{rr}[description]{\gam_\fp} &&\gam_\fp(\StMod G)
  \arrow[tail,yshift=-1.5ex]{ll}{\lam^\fp}
  \arrow[tail,yshift=1.5ex]{ll}[swap]{\mathrm{incl}},\\
  (\StMod G)_{<\fp}\arrow[tail]{rr} &&(\StMod G)_{\le\fp}
  \arrow[twoheadrightarrow,yshift=-1.5ex]{ll}
  \arrow[twoheadrightarrow,yshift=1.5ex]{ll}
  \arrow[twoheadrightarrow]{rr}[description]{\lam^\fp} &&\lam^\fp(\StMod G)
  \arrow[tail,yshift=-1.5ex]{ll}{\mathrm{incl}}
  \arrow[tail,yshift=1.5ex]{ll}[swap]{\gam_\fp}
\end{tikzcd}\]
Note that for a $\pi$-point $\alpha$ representing $\fp$ we
have in  $(\StMod G)_{\le\fp}$
\[
\Delta_G(\alpha)^\perp= (\StMod G)_{<\fp}={^\perp\Delta_G(\alpha)}.
\]

There is an analogy between point modules over finite group
schemes and standard objects of highest weight categories. In fact,
the analogy includes costandard objects, depending on whether one
thinks of a point module as induced or coinduced from a trivial
representation; see Theorem~\ref{th:gorenstein}. 

\begin{remark}
Let $\sfA$ be a highest weight category \cite{Cline/Parshall/Scott:1988a} with partially ordered set of weights $\Lambda$, which is assumed to be finite for simplicity. Thus $\sfA$ is an abelian length category with simple objects $\{L(\lambda)\}_{\lambda\in\Lambda}$. Now fix $\lambda\in\Lambda$ and consider the full subcategory $\sfA_{\le\lambda}$ of objects in $\sfA$ that have composition factors $L(\mu)$ with
$\mu\le\lambda$. The standard object $\Delta(\lambda)$ is a projective cover of $L(\lambda)$ in $\sfA_{\le\lambda}$ and its endomorphism ring is a division ring which we denote $K_\lambda$. This situation gives rise to the following recollement \cite[Theorem~3.9]{Cline/Parshall/Scott:1988a}.
\[
\begin{tikzcd}
  \sfA_{<\lambda}\arrow[tail]{rrr} &&&\sfA_{\le\lambda}
  \arrow[twoheadrightarrow,yshift=-1.5ex]{lll}
  \arrow[twoheadrightarrow,yshift=1.5ex]{lll}
  \arrow[twoheadrightarrow]{rrr}[description]{\Hom(\Delta(\lambda),-)} &&&\mod K_\lambda
  \arrow[tail,yshift=-1.5ex]{lll}
  \arrow[tail,yshift=1.5ex]{lll}
\end{tikzcd}
\]
Note that  $\Delta(\lambda)^\perp=\sfA_{<\lambda}={^\perp \nabla(\lambda)}$, where $\nabla(\lambda)$ denotes the costandard
 object corresponding to $\lambda$, namely, the injective envelope of $L(\lambda)$ in $\sfA_{\le\lambda}$.
\end{remark}

\end{document}